\patchcmd{\section}{\scshape}{\bfseries}{}{}
\renewcommand{\@secnumfont}{\bfseries}
\theoremstyle{definition}
\newtheorem{thm}{Theorem}
\newtheorem*{thm*}{Theorem}
\newtheorem{prop}[thm]{Proposition} 
\newtheorem{lem}[thm]{Lemma}
\newtheorem{defi}[thm]{Definition} 
\newtheorem{rem}[thm]{Remark}
\newtheorem{conj}[thm]{Conjecture}
\newtheorem{fact}[thm]{Fact}
\numberwithin{equation}{section}
\numberwithin{thm}{section}
\newcommand\myshade{85}
\colorlet{mylinkcolor}{red}
\colorlet{mycitecolor}{blue}
\colorlet{myurlcolor}{Aquamarine}
\newcommand{\la}[1]{\mathfrak{#1}}
\newcommand{\ZZ}{\mathbb{Z}}
\newcommand{\CC}{\mathbb{C}}
\newcommand{\sW}{\mathscr{W}}
\newcommand{\sC}{\mathscr{C}}
\newcommand{\fS}{\mathfrak{S}}
\DeclareMathOperator{\ch}{ch}
\DeclareMathOperator{\qdim}{qdim}
\DeclareMathOperator{\id}{Id}
\DeclareMathOperator{\sR}{\mathscr{R}}
\newcommand{\weylvec}{\delta}
\DeclareMathOperator{\typeA}{A}
\begin{document}

\date{}

\title{Coloured $\la{sl}_r$ invariants of torus knots and characters of $\sW_r$ algebras}
\author{Shashank Kanade}
\address{Department of Mathematics, University of Denver, Denver, CO 80208}
\email{shashank.kanade@du.edu}
\thanks{The author gratefully acknowledges the support of Simons Collaboration
 Grant for Mathematicians \#636937, and thanks T.\ Creutzig, K.\ Hikami,
 A.\ Milas and R.\ Osburn for valuable comments.
}

\begin{abstract} 
 Let $p<p'$ be a pair of coprime positive integers. In this
 note, generalizing Morton's work in the case of $\la{sl}_2$, we give a
 formula for the $\la{sl}_r$ Jones invariants of torus knots $T
 (p,p')$ coloured with the finite-dimensional irreducible representations
 $L_r(n\Lambda_1)$. When $r \leq p$, we show that appropriate limits
 of the shifted (non-normalized, framing dependent) invariants calculated
 along $L_r(nr\Lambda_1)$ are essentially the characters of certain minimal
 model principal $\sW$ algebras of type $\typeA$, namely, $\sW_r(p,p')$, up
 to some factors independent of $p$ and $p'$ but depending on $r$. In
 particular, these limits are essentially modular. We expect these limits to
 be the $0$-tails of corresponding sequences of invariants. At the end, we
 formulate a conjecture on limits for $p<r$.
\end{abstract}
\maketitle

\section{Introduction}

Let $\la{g}$ be a finite dimensional simple Lie algebra over $\CC$, let
$\lambda$ be a dominant integral weight, and let $L_{\la{g}}(\lambda)$ be the
finite dimensional irreducible representation with highest weight $\lambda$.

In \cite{RosJon-torus}, Rosso and Jones provided a formula for
calculating the $L_{\la{g}}(\lambda)$ coloured invariants of torus knots $T
(p,p')$ where $1\leq p<p'$ and $(p,p')=1$. In \cite{Mor-coloured}, Morton
used this formula and calculated coloured $\la{sl}_2$ Jones invariants for
all torus knots $T(p,p')$. 

We show that Morton's calculations generalize straight-forwardly to the case
when $\la{g}=\la{sl}_r$ and $\lambda=n\Lambda_1$ (or $\lambda=n\Lambda_
{r-1}$). The main ingredient driving these calculations is that all weights
of $L_r(n\Lambda_1)$ have multiplicity one.

In the case when $r\leq p$, we then show that as $n\rightarrow\infty$, the
resulting coloured (non-normalized, framing dependent) invariants have
periodic limits. This period is related to the group $P_r/Q_r$ where $P_r$
and $Q_r$ respectively denote the weight and root lattices of $\la{sl}_r$. In
particular, we show that if $n\rightarrow \infty$ along multiples of $r$ then
the limit is essentially the character of the vertex operator algebra $\sW_r
(p,p')$, up to some factors independent of $p,p'$ but depending on $r$. We
expect these limits to be the $0$-tails (see Definition \ref{defi:tail} below)
of the corresponding
sequence of invariants, but at the moment, we do not have a
full proof of this. The case $p<r$ is much more involved -- our formula for
(non-normalized, framing dependent) Jones polynomial involves a lot of
cancellation, and their limit is $0$, however, their tails are not. This case
seems to have a lot of nice structure, see Conjecture \ref{conj:p<r} below.
We hope to study this in detail in future.

The algebras $\sW_r(p,p')$ are known as principal $\sW$ algebras of type
$\typeA$ and due to results of Arakawa, \cite{Ara-c2} and \cite
{Ara-princrat}, they satisfy important properties, namely, $C_2$-cofiniteness
and rationality. Results of \cite{DonLinNg} now tell us that
these characters are modular invariant with respect to appropriate congruence
subgroups up to some factor $q^t$ depending on the central charge of $\sW_r
(p,p')$. 

By now, there is an extensive literature on heads, tails and the stability of
coloured Jones polynomials. We review some of the major highlights relevant
for the present paper. The $0$-stability of coefficients of coloured $\la
{sl}_2$ Jones polynomials seems to have been first conjectured by Dasbach and
Lin in \cite{DasLin}, 
and proved for alternating and for adequate links by Armond \cite{Arm-headtail}
using skein-theoretic methods.
A general notion of $k$-stability was
introduced in \cite{GarLe-Nahm} where Garoufalidis and L\^e proved that
coloured $\la{sl}_2$ Jones polynomials of alternating links are $k$-stable
for all $k$ (see also \cite{Hal-stability} and \cite{Bei-pretzel} for
higher order stability).
For some related, but purely $q$-series-theoretic studies, see 
\cite{And-knots}, \cite{KeiOsb}, \cite{BeiOsb}, etc. In \cite{GarVuo} 
Garoufalidis and Vuong conjectured that for a knot $K$, simple Lie
algebra $\la{g}$ and dominant integral weight $\lambda$, the sequence of
coloured $\la{g}$-invariants $J_K(L_\la{g}(n\lambda))$ has a property called
cyclotomic stability. Roughly speaking, this property captures the periodic
nature of limits as mentioned above. In the same paper \cite{GarVuo}, authors
proved this conjecture for all torus knots and and all rank $2$ Lie algebras.
Recently, Yuasa \cite{Yua-stability} has proved $0$-stability for $L
(n\Lambda_1)$-coloured $\la{sl}_3$ Jones polynomials for minus-adequate links
using skein theory of $A_2$; see also \cite{Yua-q} and \cite{Yua-torus}.

We note that several other relations between invariants of knots, manifolds
and characters of various algebras are known in the literature. For instance,
in \cite{BriMil-singlet} tails for torus links $(2,2p)$ were related to
characters of (irrational) singlet vertex operator algebras. Much earlier,
in \cite{HikKir}, Kashaev invariants of torus knots $T(p,p')$ were shown to be
related to the Eichler integrals of $\mathrm{Vir}(p,p')=\sW_2(p,p')$ minimal
model characters. There are even more relations between certain quantum
invariants of 3-manifolds and characters of certain logarithmic VOAs found recently,
\cite{CheEtAl}. Finally, we mention \cite{EtiGorLos} where HOMFLYPT
polynomials of torus knots were related to characters of certain rational
Cherednik algebras.

In another direction, we mention that recently, there has been much interest
(see for example \cite{AndSchWar}, \cite{CorDouUnc}, \cite{FodWel}, \cite
{War}, \cite{KanRus-cylindric} and \cite{Tsu}) in understanding the
combinatorics of $\sW_3(3,p)$ characters. In general, $\sW_r(p,p')$ 
characters are related to cylindric partitions; see \cite{GesKra}, 
\cite{FodWel}, etc. It will be very interesting to see if knot
theory could provide other combinatorial approaches to these characters
similar to how walks along the knot $T(2,2n+1)$ give rise to fermionic
characters of $\sW_2(2,2n+1)$ \cite{ArmDas-RR}. This has been our primary
motivation for the present study.

This paper is organized as follows. We shall review some basic facts about Lie
algebras $\la{sl}_r$ in Section \ref{sec:lie}. We will then review characters
of $\sW_r(p,p')$ algebras in Section \ref{sec:w} where we study the cases
$r=p$ and $p<r$ in detail. Then, in Section \ref{sec:RJ} we explain the
Rosso--Jones formula which governs coloured $\la{sl}_r$ invariants of torus
knots. Section \ref{sec:poly} presents the main calculation of $J_{T(p,p')}
(L_r(n\Lambda_1))$, generalizing Morton's arguments from \cite
{Mor-coloured}. Lastly, we study the $n\rightarrow \infty$ limits in
Section \ref{sec:limits}. As mentioned above, the limits are trivial for
$p<r$, but the tails are not. We present the example of $r=3, p=2$ in detail
and show that our results match those given in \cite{GarVuo}. Finally,
we formulate a general conjecture in the case $p<r$.

Finally, we now explain certain choices made in this paper. 
Firstly, some of our main results are presented with 
respect to non-normalized and framing dependent invariants $J_{T(p,p')}$ (where 
$T(p,p')$ is assumed to have writhe $pp'$.) This has the nice consequence that
for $p<p'$ with $(p,p')=1$, 
$J_{T(p,p')}(L_r(kr\Lambda_1))\in \ZZ[[q]]$ for all $k$s, 
see Remark \ref{rem:jonesintegral}. 
Secondly, it is common in vertex operator algebra literature to denote the 
parameters of $\sW_r$ algebras as $p,p'$. While this becomes cumbersome especially 
in complicated formulas, we keep this notation since the role of parameters
$p,p'$ is largely interchangeable and thus they merit similar notation.
Indeed, we have $\sW_r(p,p')\cong \sW_r(p',p)$ and we also have 
$T(p,p')=T(p',p)$. 
Thirdly, all of the analysis presented here also works 
with weights $n\Lambda_{r-1}$, however, for convenience, we adhere to $n\Lambda_1$.

\section{Lie algebras \texorpdfstring{$\la{sl}_r$}{sl\_r}}
\label{sec:lie}

Throughout, we will be working with the Lie algebra $\la{sl}_r$  of trace $0$ $r\times r$ complex matrices with $r\geq 2$. 
We will let the Cartan subalgebra $\mathfrak{h}$ to be the set of diagonal matrices. Let $(\cdot,\cdot)$ be the trace form
on $\la{sl}_r$ which is symmetric, invariant and non-degenerate and stays non-degenerate on $\la{h}$. 
We will identify $\mathfrak{h}$ and $\mathfrak{h}^\ast$ via this form. 
In the usual way, we will embed $\mathfrak{h}^\ast$
into the vector space spanned by symbols $\epsilon_1,\dots,\epsilon_r$ such that $(\epsilon_i,\epsilon_j)=\delta_{ij}$.
For $1\leq i\leq r-1$ we will have the simple roots  $\alpha_i=\epsilon_i-\epsilon_{i+1}\in\la{h}^*$  and the fundamental weights
$\Lambda_i\in\la{h}^*$  such that $(\Lambda_i,\alpha_j)=\delta_{ij}$.
The sets of roots and fundamental weights both form bases of $\la{h}^*$.
We have the following sets of positive, negative and all roots:
\begin{align*}
\Phi_r^{\pm}=\{  \pm(\alpha_i+\cdots+\alpha_j) \,\vert\, 1\leq i\leq j\leq r-1\},\quad \Phi_r=\Phi_r^+\cup \Phi_r^-.
\end{align*}
The Weyl vector will be denoted by $\weylvec\in \la{h}^*$ and it is uniquely determined by:
\begin{align*}
(\delta,\alpha_i)=1\quad\mathrm{for}\,\,\mathrm{all}\,\,\,1\leq i\leq r-1.
\end{align*}
We have, with $1\leq i\leq r-1$:
\begin{align*}
\weylvec&=\frac{(r-1)}{2}\epsilon_1 + \frac{(r-3)}{2}\epsilon_2  +\cdots + \frac{(1-r)}{2}\epsilon_r,\\
\Lambda_i &= \frac{r-i}{r}(\epsilon_1+\cdots+\epsilon_i) - \frac{i}{r}(\epsilon_{i+1}+\cdots+\epsilon_r).
\end{align*}

We will let $Q_r\subseteq P_r$ be the root and the weight lattices,
respectively. Recall that $Q_r$ is an even lattice, i.e., $\|\lambda\|^2\in
2\ZZ$ for all $\lambda\in Q_r$. $P_r^+$ will denote the set of dominant
integral weights, i.e., $\lambda\in P_r^+$ iff $(\lambda,\alpha_i)\in \ZZ_
{\geq 0}$ for all $i$. We will let $P^\circ_r$ denote the dominant integral
weights that belong to the interior of the fundamental Weyl chamber, i.e.,
$\lambda\in P_r^\circ$ iff $(\lambda,\alpha_i)\in \ZZ_{> 0}$ for all $i$.

The Weyl group of $\la{sl}_r$ acts on $\mathfrak{h}^\ast$ and is isomorphic to the symmetric group $\fS_r$.
$\fS_r$ acts on $\epsilon_i$s by permuting the subscripts and this determines its action on $\mathfrak{h}^\ast$.
Recall that $Q_r$, $P_r$ and the form $(\cdot,\cdot)$ are all $\fS_r$ invariant.
With respect to the simple transpositions $(i,i+1)$, the length of the shortest expression for $w$ will be denoted
as $\ell(w)$. The sign representation of $\fS_r$ equals $(-1)^{\ell(w)}$.

For $\lambda\in P^+_r$, let $L_r(\lambda)$ denote the irreducible $\la{sl}_r$ module with highest weight $\lambda$.
The modules $L_r(n\Lambda_1)$ for $n\geq 0$ will be important to us in this note. We denote the set of weights of $L_r(n\Lambda_1)$ by $\Pi_{r,n}$.
The following fact about $\Pi_{r,n}$ will be crucial for us, and so we provide a quick proof; see also \cite{FulHar}.

\begin{fact}
We have
$\lambda \in \Pi_{r,n}$ iff
$\lambda=n\Lambda_1-a_1\alpha_1-\cdots-a_{r-1}\alpha_{r-1}$ with 
$n\geq a_1\geq \cdots a_{r-1}\geq 0.$
Additionally, every weight of $L_r(n\Lambda_1)$ occurs with multiplicity $1$. 
\end{fact}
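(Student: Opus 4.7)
The plan is to identify $L_r(n\Lambda_1)$ with the $n$-th symmetric power $\mathrm{Sym}^n V$ of the standard representation $V=\CC^r$, and then translate the monomial description of weights into the $(a_1,\dots,a_{r-1})$ parameterization.

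First I would check that $\mathrm{Sym}^n V$ is irreducible with highest weight $n\Lambda_1$. Picking coordinates $x_1,\dots,x_r$ on $V$, the vector $x_1^n$ is killed by every upper-triangular generator $E_{i,j}$ with $i<j$ and, viewed as a functional on $\la{h}$, has weight $n\epsilon_1$. Using the trace-zero relation $\epsilon_1+\cdots+\epsilon_r=0$ in $\la{h}^*$, this functional coincides with $n\Lambda_1$. Irreducibility can be confirmed either by comparing $\dim\mathrm{Sym}^n V=\binom{n+r-1}{r-1}$ against the Weyl dimension formula, or equivalently by showing that the monomial basis is cyclically generated from $x_1^n$ by the lowering operators $E_{i+1,i}$.

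Given this identification, a basis of $L_r(n\Lambda_1)$ is the set of monomials $x_1^{k_1}\cdots x_r^{k_r}$ with $k_i\in\ZZ_{\geq 0}$ and $k_1+\cdots+k_r=n$. Each such monomial is a weight vector of weight $k_1\epsilon_1+\cdots+k_r\epsilon_r$, and different monomials have different weights, so every weight of $L_r(n\Lambda_1)$ appears with multiplicity exactly one. To put weights in the claimed form I would simply unwind $\alpha_i=\epsilon_i-\epsilon_{i+1}$:
\begin{align*}
n\Lambda_1-\sum_{i=1}^{r-1} a_i\alpha_i=(n-a_1)\epsilon_1+\sum_{i=2}^{r-1}(a_{i-1}-a_i)\epsilon_i+a_{r-1}\epsilon_r.
\end{align*}
Setting $k_1=n-a_1$, $k_i=a_{i-1}-a_i$ for $2\le i\le r-1$, and $k_r=a_{r-1}$ (inverted by the partial sums $a_j=k_{j+1}+\cdots+k_r$) gives a bijection between admissible $(k_1,\dots,k_r)$ and admissible $(a_1,\dots,a_{r-1})$: the inequalities $n\geq a_1\geq\cdots\geq a_{r-1}\geq 0$ are literally equivalent to $k_i\geq 0$ (together with $\sum k_i=n$).

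There is no serious obstacle here: the entire argument rests on the standard identification $L_r(n\Lambda_1)\cong\mathrm{Sym}^n V$, after which reading off weights from the monomial basis and performing the linear change of variables are both routine. If one wanted a self-contained path that avoids quoting the symmetric-power description, the same conclusion follows from the Weyl character formula applied to $\mathrm{Sym}^n V$, or from the fact that $\Lambda_1$ is a minuscule fundamental weight and so each weight of $L_r(n\Lambda_1)$ is obtainable from $n\Lambda_1$ by successive subtraction of simple roots without any multiplicity reduction.
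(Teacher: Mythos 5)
Your proof is correct, but it takes a genuinely different route from the paper. The paper stays inside the abstract module $L_r(n\Lambda_1)$: it first uses unbrokenness of $\la{sl}_2$-strings along $\alpha_1,\dots,\alpha_{r-1}$ to show that every $n\Lambda_1-a_1\alpha_1-\cdots-a_{r-1}\alpha_{r-1}$ with $n\geq a_1\geq\cdots\geq a_{r-1}\geq 0$ is a weight, then counts these ($\binom{n+r-1}{r-1}$ of them) and compares with the Weyl dimension formula, which gives $\dim L_r(n\Lambda_1)=\binom{n+r-1}{r-1}$; since the number of distinct weights already exhausts the dimension, the weight set is exactly as claimed and every multiplicity is $1$. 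You instead realize the module concretely as $\mathrm{Sym}^n\CC^r$, read off the weights and the multiplicity-one property from the monomial basis, and translate to the $(a_1,\dots,a_{r-1})$ parameterization by the linear substitution $k_1=n-a_1$, $k_i=a_{i-1}-a_i$, $k_r=a_{r-1}$, which is a correct bijection. What your approach buys is an explicit weight basis and an immediate explanation of multiplicity one; what the paper's buys is that it never needs the identification $L_r(n\Lambda_1)\cong\mathrm{Sym}^n\CC^r$ (which itself requires an irreducibility argument -- your dimension-count route for that step uses the same Weyl dimension formula the paper invokes, so neither proof is more elementary in that respect). One slip to fix: you call $x_1,\dots,x_r$ ``coordinates on $V$''; coordinate functions span $V^\ast$, on which $E_{1j}$ does \emph{not} kill $x_1$ and the weight of $x_1$ is $-\epsilon_1$, so $\mathrm{Sym}^n V^\ast\cong L_r(n\Lambda_{r-1})$ instead. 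You should say the $x_i$ are a basis of $V$ itself (weight $\epsilon_i$); with that reading, your computation $n\epsilon_1=n\Lambda_1$ via the trace-zero relation and the rest of the argument are exactly right.
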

\begin{proof}
Successively using the unbrokenness of the $\la{sl}_2$ strings with respect to the roots $\alpha_1$, $\alpha_2$, \dots, $\alpha_{r-1}$,
we see that each $\lambda=n\Lambda_1-a_1\alpha_1-\cdots-a_{r-1}\alpha_{r-1}$ with $n\geq a_1\geq \cdots a_{r-1}\geq 0$ 
belongs to $\Pi_{r,n}$. This implies that the cardinality of $\Pi_{r,n}$ is at least the $n$th $(r-1)$-dimensional triangular number, 
which equals $\binom{n+r-1}{r-1}$. This is in fact the same as the dimension of $L_r(n\Lambda_1)$ as can be seen quickly from the Weyl dimension
formula \cite[Eq.\ 15.17]{FulHar}. Consequently, $\Pi_{r,n}$ is precisely the set of such weights and moreover, each of them appears with multiplicity exactly $1$.
\end{proof}

In fact, similar argument works with $L_r(n\Lambda_{r-1})$. The weights of
this module are of the form $n\Lambda_{r-1}-a_{r-1}\alpha_{r-1}-\cdots-a_{1}\alpha_{1}$ 
for $n\geq a_{r-1}\geq \cdots\geq a_1\geq 0$ with each weight
having multiplicity exactly $1$. We will stick with $n\Lambda_1$ throughout
this note but all the analysis also works for $n\Lambda_{r-1}$.

Noting that $r\Lambda_1=(r-1)\alpha_1+(r-2)\alpha_2+\cdots+\alpha_{r-1}\in Q_r$,  
following two further facts are clear.

\begin{fact}
We have that $\Pi_{r,n}\subset\Pi_{r,n+r}$.
\end{fact}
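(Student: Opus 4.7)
The plan is to use the explicit description of $\Pi_{r,n}$ given in the previous fact and translate the containment into an inequality-preserving substitution using the identity $r\Lambda_1=(r-1)\alpha_1+(r-2)\alpha_2+\cdots+\alpha_{r-1}$ already highlighted just before the statement.

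First, I would take an arbitrary $\lambda\in\Pi_{r,n}$ and write
\[
\lambda=n\Lambda_1-a_1\alpha_1-\cdots-a_{r-1}\alpha_{r-1},\qquad n\geq a_1\geq a_2\geq\cdots\geq a_{r-1}\geq 0,
\]
using the previous fact. Then I would add and subtract $r\Lambda_1$, substitute the $Q_r$-expansion of $r\Lambda_1$, and combine like terms to rewrite
\[
\lambda=(n+r)\Lambda_1-b_1\alpha_1-\cdots-b_{r-1}\alpha_{r-1},\qquad b_i:=a_i+(r-i).
\]

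The second step is to verify that the coefficients $b_i$ satisfy the inequalities characterizing membership in $\Pi_{r,n+r}$. For the upper bound, $n+r\geq b_1$ reduces to $a_1\leq n+1$, which follows immediately from $a_1\leq n$. For the chain of monotonicity, note that $b_i-b_{i+1}=(a_i-a_{i+1})+1\geq 1$, so $b_1\geq b_2\geq\cdots\geq b_{r-1}$ with strict gaps. Positivity is automatic from $b_{r-1}=a_{r-1}+1\geq 1$. Applying the previous fact in the reverse direction then yields $\lambda\in\Pi_{r,n+r}$.

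There is no real obstacle here: the whole argument is a bookkeeping exercise, and the key input, namely the expression of $r\Lambda_1$ in the root basis, is already stated in the paragraph preceding the fact. The proof is therefore essentially two lines once one records the shift $a_i\mapsto a_i+(r-i)$.
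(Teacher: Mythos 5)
Your proof is correct and is exactly the argument the paper intends: the paper declares the fact ``clear'' after noting $r\Lambda_1=(r-1)\alpha_1+(r-2)\alpha_2+\cdots+\alpha_{r-1}\in Q_r$, and your shift $a_i\mapsto a_i+(r-i)$ together with the characterization of $\Pi_{r,n}$ from the preceding fact is precisely the bookkeeping that makes this explicit. No gaps; the verification of $n+r\geq b_1\geq\cdots\geq b_{r-1}\geq 0$ is complete.
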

\begin{fact}
We have that $\bigcup_{j\geq 0}\Pi_{r,n+jr} = n\Lambda_1+Q_r= t\Lambda_1+Q_r$
where $t$ is the residue of $n$ modulo $r$.
\end{fact}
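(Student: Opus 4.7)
The strategy is to split the statement into the two equalities and handle each independently. The right-hand equality $n\Lambda_1 + Q_r = t\Lambda_1 + Q_r$ is immediate from the observation stated just above the fact, namely $r\Lambda_1 \in Q_r$: writing $n = t + kr$ with $k \in \ZZ_{\geq 0}$, we have $n\Lambda_1 - t\Lambda_1 = k\,(r\Lambda_1) \in Q_r$.

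For the first equality, I would verify both inclusions. The inclusion $\bigcup_{j\geq 0} \Pi_{r,n+jr} \subseteq n\Lambda_1 + Q_r$ is direct: by the first fact of this section, every element of $\Pi_{r,n+jr}$ has the form $(n+jr)\Lambda_1 - \sum_i a_i \alpha_i$, and since $jr\Lambda_1 \in Q_r$ and $\sum_i a_i \alpha_i \in Q_r$, such a weight lies in $n\Lambda_1 + Q_r$.

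The reverse inclusion is the main content. Given $\mu = n\Lambda_1 - \sum_{i=1}^{r-1} c_i \alpha_i$ with $c_i \in \ZZ$, the plan is to exhibit a $j \geq 0$ for which $\mu$ lies in $\Pi_{r,n+jr}$. Using $r\Lambda_1 = \sum_{i=1}^{r-1}(r-i)\alpha_i$ we rewrite
\begin{align*}
\mu = (n+jr)\Lambda_1 - jr\Lambda_1 - \sum_{i=1}^{r-1} c_i \alpha_i = (n+jr)\Lambda_1 - \sum_{i=1}^{r-1}\bigl(c_i + j(r-i)\bigr)\alpha_i.
\end{align*}
Setting $a_i(j) := c_i + j(r-i)$, by the first fact it then suffices to choose $j$ so that $n+jr \geq a_1(j) \geq a_2(j) \geq \cdots \geq a_{r-1}(j) \geq 0$. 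The consecutive differences are
\begin{align*}
(n+jr) - a_1(j) = n - c_1 + j, \qquad a_i(j) - a_{i+1}(j) = c_i - c_{i+1} + j, \qquad a_{r-1}(j) = c_{r-1} + j,
\end{align*}
each of which tends to $+\infty$ as $j \to \infty$. Hence any $j$ exceeding the (finite) maximum of the relevant thresholds does the job.

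The only potential obstacle is the bookkeeping when passing between the $\Lambda_1$- and $\alpha_i$-expansions, but this reduces to a short finite-dimensional inequality check, driven entirely by the fact that each coefficient of $r\Lambda_1$ in the simple-root basis is strictly positive.
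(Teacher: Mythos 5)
Your proposal is correct, and it uses exactly the ingredients the paper invokes when it declares this fact "clear" — the characterization of $\Pi_{r,n}$ from the first Fact together with $r\Lambda_1=(r-1)\alpha_1+\cdots+\alpha_{r-1}\in Q_r$ — so it is essentially the paper's (unwritten) argument, with the details of the inequality check filled in. The verification that each consecutive difference $n-c_1+j$, $c_i-c_{i+1}+j$, $c_{r-1}+j$ grows with $j$ is exactly the right bookkeeping, and it is sound.
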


\section{Characters of \texorpdfstring{$\sW_r(p,p')$}{W\_r(p,p')} algebras}
\label{sec:w} 

In this section, we give character formulas for irreducible modules of the
vertex operator algebra $\sW_r(p,p')$ following \cite{AndSchWar} and \cite
{FodWel}.  We then study the cases $r=p$ and $p<r$ separately, as they have
interesting ramifications when we consider $\la{sl}_r$ invariants of torus
knots later. Note that this character formula requires $r\leq p,p'$ with
$p,p'$ coprime, but we may continue to substitute other values of the
parameters in the characters and treat them purely as $q$-series.
 
\subsection{The characters}
Let $2\leq r\leq p < p'$ be integers with $p,p'$ relatively prime.

Given a non-negative integer $k$, we will let $P^+_{r,k}$ be the of dominant integral weights $\lambda$ such that
$(\lambda,\theta)\leq k$ with $\theta$ being the highest root $\alpha_1+\cdots+\alpha_{r-1}$. This set is in bijection with the highest weights of integrable, irreducible, highest-weight,
level $k$ modules for the affine Lie algebra $\widehat{\la{sl}_r}$. 

The principal $\sW$ algebra of $\la{sl}_r$ with parameters $(p,p')$, denoted as $\sW_r(p,p')$ 
has inequivalent simple representations parametrized by $(\xi,\zeta)\in {P}^+_{r,p-r}\times {P}^+_{r,p'-r}$.
With $\eta(q)=q^{1/24}(q;q)_\infty$ denoting the Dedekind eta function,
the corresponding characters are given as follows, 
\cite[Eq.\ (91)]{FodWel}:
\begin{align}
\chi^{r,p,p'}_{\xi,\zeta}
&=\dfrac{1}{\eta(q)^{r-1}}
\sum_{\alpha\in Q_r}\sum_{\sigma\in\fS_r}
(-1)^{\ell(\sigma)}q^{\frac{1}{2}pp'\| \alpha- (\xi+\weylvec)/p + \sigma(\zeta+\weylvec)/p'\|^2} 
\nonumber\\
&=
\dfrac{q^{\frac{1}{2}pp'\|\frac{\xi+\weylvec}{p} - \frac{\zeta+\weylvec}{p'}\|^2  -\frac{r-1}{24}}}{(q;q)_\infty^{r-1}}
\sum_{\alpha\in Q_r}\sum_{\sigma\in\fS_r}
(-1)^{\ell(\sigma)}q^{
\frac{1}{2}pp' 
\| \alpha\|^2 
-p'(\alpha,\xi+\weylvec)
+p(\alpha,\sigma(\zeta+\weylvec))
-(\xi+\weylvec,\sigma(\zeta+\weylvec)-(\zeta+\weylvec))
}\label{eqn:Wcharnumer} \\
&=
{q^{\frac{1}{2}pp'\|\frac{\xi+\weylvec}{p} - \frac{\zeta+\weylvec}{p'}\|^2  -\frac{r-1}{24}}}
(q;q)_\infty \sC^{r}_{\xi,\zeta}.
\nonumber
\end{align}
where $\sC^{r}_{\xi,\zeta}\in 1+q\ZZ[[q]]$ is the generating function
of certain kinds of cylindric partitions, \cite[Eqn.\ (27)]{FodWel}.
We may now normalize the character so that it belongs to $1+q\ZZ[[q]]$:
\begin{align}
\overline{\chi}^{r,p,p'}_{\xi,\zeta}=
(q;q)_\infty \sC^{r}_{\xi,\zeta}\in 1+q\ZZ[[q]].
\label{eqn:chi1plus}
\end{align}
The choice $\xi=\zeta=0$ corresponds to the vertex operator algebra $\sW_r(p,p')$, and we record the following
normalized character:
\begin{align}
\overline{\chi}(\sW_r(p,p'))&=\overline{\chi}^{r,p,p'}_{0,0}
=
\dfrac{1}{(q;q)_\infty^{r-1}}
\sum_{\alpha\in Q_r}\sum_{\sigma\in\fS_r}
(-1)^{\ell(\sigma)}q^{
\frac{1}{2}pp' 
\| \alpha\|^2 
-p'(\alpha,\weylvec)
+p(\alpha,\sigma(\weylvec))
-(\weylvec,\sigma(\weylvec)-\weylvec)
}.
\label{eqn:Wnormchar}
\end{align}

Given a $\mu\in P_r$, it will be beneficial for us to define the $\mu$-shifted character:
\begin{align*}
\overline{\chi}^{r,p,p';\,\mu}_{\xi,\zeta}=
\dfrac{1}{(q;q)_\infty^{r-1}}
\sum_{\alpha\in Q_r+\mu}\sum_{\sigma\in\fS_r}
(-1)^{\ell(\sigma)}q^{
\frac{1}{2}pp' 
\| \alpha\|^2 
-p'(\alpha,\xi+\weylvec)
+p(\alpha,\sigma(\zeta+\weylvec))
-(\xi+\weylvec,\sigma(\zeta+\weylvec)-(\zeta+\weylvec))
}
\end{align*}
These shifted characters are unchanged if we vary $\mu$ in a coset of $Q_r$. For
$\mu\in Q_r$, we have:
\begin{align*}
\overline{\chi}^{r,p,p';\,\mu}_{\xi,\zeta}=\overline{\chi}^{r,p,p'}_{\xi,\zeta}.
\end{align*}

\subsection{The case \texorpdfstring{$r=p$}{r=p}}

We now show that the case $r=p$ is somewhat special in that all the $\mu$-shifted  characters coincide
up to a sign.
In particular, when $\mu=k\Lambda_1$, it will help us in showing that shifted Jones invariants of torus knots $T(r,p')$ with $r<p'$ and $(r,p')=1$
have a well-defined (non-periodic) limit with respect to the representations $L_r(n\Lambda_1)$ of $\la{sl}_r$.

To this end, define the following for $p'>r$ with $(r,p')=1$, $\mu\in P_r$,  $\zeta\in P_{r,p'-r}^+$ and $\sigma\in \fS_r$:
\begin{align}
\Gamma^{r,r,p';\,\mu}_{0,\zeta}&(\sigma)=
\sum_{\alpha\in Q_r+\mu}
(-1)^{\ell(\sigma)}q^{
\frac{1}{2}rp'
\|\alpha\|^2 - 
p'(\alpha,\weylvec)
+r(\alpha,\sigma(\zeta+\weylvec))
- (\weylvec,\sigma(\zeta+\weylvec)-\zeta-\weylvec)
}\nonumber\\
&=
q^{\frac{1}{2}rp'
\|\mu\|^2 
+(\mu,-p'\weylvec+r\sigma(\zeta+\weylvec))
- (\weylvec,\sigma(\zeta+\weylvec)-\zeta-\weylvec)
}
\sum_{\alpha\in Q_r}
(-1)^{\ell(\sigma)}q^{
\frac{1}{2}rp'
\|\alpha\|^2 
+
(\alpha,rp'\mu-p'\weylvec+r\sigma(\zeta+\weylvec))
}
\label{eqn:Gammadef}
\end{align}
Before we proceed to manipulate these sums, we record the following useful property.

\begin{lem}
\label{lem:Prw}
Given any $\mu\in P_r$, 
there exists a $w\in \fS_r$ and $\lambda\in Q_r$ such that:
\begin{align}
r\mu - \delta+w\delta=r\lambda\in rQ_r.
\label{eqn:rP}
\end{align}
\end{lem}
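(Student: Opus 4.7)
The plan is to work in the $\epsilon$-basis and reduce the statement to a combinatorial observation about permutations. Writing $\mu=\sum_{i=1}^{r-1}b_i\Lambda_i$ and using the formula $r\Lambda_i=(r-i)(\epsilon_1+\cdots+\epsilon_i)-i(\epsilon_{i+1}+\cdots+\epsilon_r)$, I note that every entry of $r\Lambda_i$ is congruent to $-i\pmod r$. Consequently, writing $r\mu=\sum_{j=1}^r m_j\epsilon_j$, all the integers $m_j$ are congruent to the same residue $c:=-\sum_i ib_i\pmod r$.

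Next, I rewrite the desired conclusion in the $\epsilon$-basis. Using $\weylvec=\sum_i\tfrac{r+1-2i}{2}\epsilon_i$, a direct computation gives
\begin{align*}
w\weylvec-\weylvec=\sum_{j=1}^r \bigl(j-w^{-1}(j)\bigr)\,\epsilon_j,
\end{align*}
so $r\mu-\weylvec+w\weylvec=\sum_j\bigl(m_j+j-w^{-1}(j)\bigr)\epsilon_j$. Thus the lemma amounts to finding $w\in\fS_r$ such that
\begin{align*}
w^{-1}(j)\equiv m_j+j\pmod r\qquad\text{for every }j=1,\dots,r.
\end{align*}

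Setting $\sigma=w^{-1}$, the problem becomes: exhibit a permutation $\sigma\in\fS_r$ with $\sigma(j)\equiv m_j+j\pmod r$. Because all $m_j$ share the residue $c\pmod r$, the values $m_j+j\equiv c+j\pmod r$ run through a complete system of residues modulo $r$ as $j$ varies over $\{1,\dots,r\}$. Hence there is a unique $\sigma(j)\in\{1,\dots,r\}$ satisfying the congruence for each $j$, and distinctness of residues guarantees that $\sigma$ is a bijection, i.e.\ a permutation.

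With $w:=\sigma^{-1}$, define $\lambda:=\tfrac{1}{r}(r\mu-\weylvec+w\weylvec)$. By construction each $\epsilon$-coordinate of $r\mu-\weylvec+w\weylvec$ is divisible by $r$, so $\lambda$ has integer $\epsilon$-coordinates. The sum of entries of $r\mu$ is $0$ (as $\mu\in P_r$) and the sum of entries of $w\weylvec-\weylvec$ is $\sum_j(j-w^{-1}(j))=0$, so $\lambda$ is an integer vector in $\sum\epsilon_j=0$, which is precisely $Q_r$. This gives $r\mu-\weylvec+w\weylvec=r\lambda\in rQ_r$, as required. The only non-routine point is the congruence observation that $r\Lambda_i$ has all $\epsilon$-coordinates equal mod $r$; the rest is a clean pigeonhole argument.
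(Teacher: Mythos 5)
Your proof is correct, but it takes a genuinely different route from the paper's. You work entirely in the $\epsilon$-basis: observing that all coordinates of $r\mu$ share a common residue $c\equiv-\sum_i ib_i\pmod r$, and that $w\weylvec-\weylvec=\sum_j(j-w^{-1}(j))\epsilon_j$, you reduce the lemma to producing a permutation $\sigma$ with $\sigma(j)\equiv j+c\pmod r$, which exists (and is unique) by a pigeonhole argument on residues; integrality and the zero coordinate-sum then put $\lambda$ in $Q_r$. The paper instead proves a structural additivity property --- if \eqref{eqn:rP} holds for $(\mu,w)$ and $(\mu',w')$ then it holds for $(\mu+w\mu',ww')$, see \eqref{eqn:rPadd} --- verifies the base case $\mu=\Lambda_1$ with the $r$-cycle $(r,1,2,\dots,r-1)$, and iterates to reach each $\Lambda_k$ and hence all of $P_r$ modulo $Q_r$. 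It is worth noting that the two arguments produce the same witnesses: your $\sigma$ is the cyclic shift $j\mapsto j+c$, i.e.\ exactly a power of the $r$-cycle that the paper iterates (for $\mu=\Lambda_1$ one has $c=-1$ and your $w=\sigma^{-1}$ is the paper's cycle). What your approach buys is brevity and explicitness --- $w$ is read off directly from the residue of the coordinates of $r\mu$, with no induction; what the paper's approach buys is a coordinate-free recursion that makes transparent how the sign $(-1)^{\ell(w)}$ and the witness $w$ behave under the twisted addition $\mu+w\mu'$, which is the mechanism by which the subsequent proposition tracks the sign's dependence on $\mu$.
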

\begin{proof}
If \eqref{eqn:rP} holds for $\mu\in P_r$ with $w\in \fS_r$ and $\mu'\in P_r$ with $w'\in \fS_r$,
then it holds for $\mu+w\mu'$ with $ww'\in \fS_r$ because:
\begin{align}
r(\mu+w\mu')-\weylvec+ww'\weylvec=r\mu-\weylvec+w\weylvec + rw\mu'-w\weylvec+ww'\weylvec
\in rQ_r.
\label{eqn:rPadd}
\end{align}
Let us now prove \eqref{eqn:rP} for $\mu=\Lambda_1$.
We have:
\begin{align*}
r\Lambda_1-\weylvec &= \frac{r-1}{2}\epsilon_1 +  \left(\frac{1-r}{2}\epsilon_2 + \frac{3-r}{2}\epsilon_3 + \cdots \frac{r-3}{2}\epsilon_r \right).
\end{align*}
We may thus take $w$ to be the cycle $\sigma=(r,1,2,3,\dots,r-1)$, so that $r\Lambda_1-\weylvec+\sigma\weylvec=0\in rQ_r$.

Iterating \eqref{eqn:rPadd} with $\mu=\mu'=\Lambda_1$ and $w=w'=\sigma$,
we see that \eqref{eqn:rP} holds for 
$\mu_k=\Lambda_1+w\Lambda_1+\cdots +w^{k-1}\Lambda_1$ with $\sigma^k$.
It is easy to see that for $1\leq k\leq r$,
\begin{align*}
\mu_k=
\Lambda_1+\sigma\Lambda_1+\cdots +\sigma^{k-1}\Lambda_1
=
\frac{(r-k)}{r}(\epsilon_1+\cdots+\epsilon_k)-\dfrac{k}{r}(\epsilon_{k+1}+\cdots+\epsilon_r)
\end{align*}
which equals $\Lambda_k$ if $1\leq k\leq r-1$ and is $0$ if $k=r$.

Any $\nu\in P_r$ can now be written as $\nu=\mu_k+\phi$ for some $1\leq k\leq r$ and $\phi\in Q_r$. 
Required statement for $\nu$ now follows by taking $\mu=\mu_k$, $\mu'=\phi$, $w=\sigma^k$ and $w'=\id$ in \eqref{eqn:rPadd}.
\end{proof}

We now have the following result.
\begin{prop}
Fix $\mu\in P_r$,  and let $\lambda\in Q_r$  and $w\in \fS_r$ be as in Lemma \ref{lem:Prw}. Then, for all $\sigma\in\fS_r$
and $\zeta\in P^+_{r,p'-r}$ we have:
\begin{align*}
(-1)^{\ell({w})}
\cdot
\Gamma^{r,r,p';\,\mu}_{0,\zeta}(\sigma)
=
\Gamma^{r,r,p';\,0}_{0,\zeta}(w^{-1}\sigma).
\end{align*}
Consequently, we have:
\begin{align}
(-1)^{\ell({w})}&
\cdot
\overline{\chi}^{r,r,p';\,\mu}_{0,\zeta}
=\overline{\chi}^{r,p,p'}_{0,\zeta}.
\label{eqn:r=p}
\end{align}
Note that the sign $(-1)^{\ell(w)}$ depends solely on $\mu$.
\end{prop}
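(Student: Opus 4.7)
The plan is to transform the sum defining $\Gamma^{r,r,p';\,\mu}_{0,\zeta}(\sigma)$ into the sum defining $\Gamma^{r,r,p';\,0}_{0,\zeta}(w^{-1}\sigma)$ via two successive changes of variables that exploit the key identity $r\mu = \delta - w\delta + r\lambda$ supplied by Lemma \ref{lem:Prw}.

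I would begin from the second form in \eqref{eqn:Gammadef}, where $\alpha$ ranges over $Q_r$ and the $\alpha$-linear portion of the exponent is $(\alpha,\, rp'\mu - p'\delta + r\sigma(\zeta+\delta))$. Substituting $rp'\mu = p'\delta - p'w\delta + rp'\lambda$ reduces this coefficient to $-p'w\delta + rp'\lambda + r\sigma(\zeta+\delta)$. Next I would translate $\alpha \mapsto \alpha - \lambda$, a bijection on $Q_r$ since $\lambda \in Q_r$; this completes the square against the quadratic $\tfrac{1}{2}rp'\|\alpha\|^2$, absorbs the $rp'\lambda$ term entirely into a $\lambda$-dependent constant, and adjusts the remaining linear terms by $+p'(\lambda, w\delta) - r(\lambda, \sigma(\zeta+\delta))$. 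Finally, I would invoke the $\fS_r$-invariance of $Q_r$ and of the form $(\cdot,\cdot)$ to substitute $\alpha = w\gamma$ with $\gamma \in Q_r$. Because $w$ is an isometry, $(w\gamma, w\delta) = (\gamma, \delta)$ and $(w\gamma, \sigma(\zeta+\delta)) = (\gamma, w^{-1}\sigma(\zeta+\delta))$, so the $\gamma$-dependent part of the exponent becomes exactly $\tfrac{1}{2}rp'\|\gamma\|^2 - p'(\gamma,\delta) + r(\gamma, w^{-1}\sigma(\zeta+\delta))$, matching the $\alpha$-dependent part of $\Gamma^{r,r,p';\,0}_{0,\zeta}(w^{-1}\sigma)$.

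What remains is to verify that the accumulated $\gamma$-independent constants add up to $-(\delta, w^{-1}\sigma(\zeta+\delta) - \zeta - \delta)$, the constant in the target expression. The $\sigma$-dependent constants collapse via the identity $(w\delta, \sigma(\zeta+\delta)) = (\delta, w^{-1}\sigma(\zeta+\delta))$ together with $\delta - w\delta = r(\mu - \lambda)$, producing precisely $-(\delta, w^{-1}\sigma(\zeta+\delta)) + (\delta, \zeta + \delta)$. The $\sigma$-independent residue reduces to $\tfrac{1}{2}rp'\|\mu - \lambda\|^2 - p'(\mu - \lambda, \delta)$, which vanishes because $r(\mu - \lambda) = \delta - w\delta$ and $\|w\delta\|=\|\delta\|$ make both terms equal to $(p'/r)(\|\delta\|^2 - (\delta, w\delta))$. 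For the sign, $(-1)^{\ell(w^{-1}\sigma)} = \sgn(w)\sgn(\sigma) = (-1)^{\ell(w)}(-1)^{\ell(\sigma)}$, which produces the overall $(-1)^{\ell(w)}$ asserted in the identity.

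Identity \eqref{eqn:r=p} then follows by summing the $\Gamma$-identity over $\sigma \in \fS_r$ (using that $\sigma \mapsto w^{-1}\sigma$ permutes $\fS_r$) and dividing by $(q;q)_\infty^{r-1}$. The main obstacle is the final bookkeeping step: every term must cancel precisely, and the delicate vanishing of the $\sigma$-independent residue is genuine content, hinging on the very particular shape $\mu - \lambda = (\delta - w\delta)/r$ guaranteed by Lemma \ref{lem:Prw} — this is exactly where the special feature $p=r$ enters (the coefficient $r$ in the quadratic form $\tfrac{1}{2}rp'\|\alpha\|^2$ is precisely what allows a translate by $\lambda\in Q_r$ to interact correctly with a shift $\mu\in P_r$).
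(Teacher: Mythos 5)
Your proposal is correct and is essentially the paper's own argument run in reverse: the paper starts from $\Gamma^{r,r,p';\,0}_{0,\zeta}(w^{-1}\sigma)$ and rewrites the sum over $Q_r$ as a sum over $w^{-1}Q_r+\lambda$, while you start from $\Gamma^{r,r,p';\,\mu}_{0,\zeta}(\sigma)$ and apply the translation $\alpha\mapsto\alpha-\lambda$ followed by the Weyl substitution $\alpha=w\gamma$, but both proofs rest on the same ingredients (Lemma \ref{lem:Prw}, $\fS_r$-invariance of $Q_r$ and the form, the sign identity $(-1)^{\ell(w^{-1}\sigma)}=(-1)^{\ell(w)}(-1)^{\ell(\sigma)}$, and a final verification that the residual $q$-exponents vanish). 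Your closing bookkeeping, including the vanishing of $\tfrac{1}{2}rp'\|\mu-\lambda\|^2-p'(\mu-\lambda,\weylvec)$ via $r(\mu-\lambda)=\weylvec-w\weylvec$, checks out and matches the paper's two verification identities.
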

\begin{proof}
Pick and fix $\sigma\in \fS_r$.
For convenience, denote $v=w^{-1}\sigma$.
We then have:
\begin{align*}
\Gamma&{}^{r,r,p';\,0}_{0,\zeta}(v)\\
&=
\sum_{\alpha\in Q_r}
(-1)^{\ell(v)}q^{
\frac{1}{2}rp'
\|\alpha\|^2 
- 
p'(\alpha,\weylvec)
+r(\alpha,v(\zeta+\weylvec))
- (\weylvec,v(\zeta+\weylvec)-\zeta-\weylvec)
}\\
&=\sum_{\alpha\in w^{-1}Q_r+\lambda}
(-1)^{\ell(v)}q^{
\frac{1}{2}rp'
\|\alpha\|^2 
- 
p'(\alpha,\weylvec)
+r(\alpha,v(\zeta+\weylvec))
- (\weylvec,v(\zeta+\weylvec)-\zeta-\weylvec)
}
\\
&=
q^{
- (\weylvec,v(\zeta+\weylvec)-\zeta-\weylvec)
+\frac{1}{2}rp'
\|\lambda\|^2 
+(\lambda,-p'w\weylvec+r\sigma(\zeta+\weylvec))
}
\sum_{\alpha\in Q_r}
(-1)^{\ell(v)}q^{
\frac{1}{2}rp'
\|\alpha\|^2 
+
(\alpha,rp'\lambda-p'w\weylvec+r\sigma(\zeta+\weylvec))
}\\
&\overset{\eqref{eqn:rP}}{=}
(-1)^{\ell({w})}
q^{
- (\weylvec,v(\zeta+\weylvec)-\zeta-\weylvec)
+\frac{1}{2}rp'
\|\lambda\|^2 
+(\lambda,-p'w\weylvec+r\sigma(\zeta+\weylvec))
}
\sum_{\alpha\in Q_r}
(-1)^{\ell(\sigma)}q^{
\frac{1}{2}rp'
\|\alpha\|^2 
+
(\alpha,rp'\mu-p'\weylvec+r\sigma(\zeta+\weylvec))
}
\\
&=
(-1)^{\ell({w})}
q^{
 \frac{1}{2}rp'
\left( \|\lambda\|^2 - \|\mu\|^2 \right)
-(\lambda,p'w\weylvec)
+(\mu,p'\weylvec)
-(\weylvec,v(\zeta+\weylvec))
+(\lambda,r\sigma(\zeta+\weylvec))
+(\weylvec,\sigma(\zeta+\weylvec))
-(\mu,r\sigma(\zeta+\weylvec))
}
\cdot
\Gamma^{r,r,p';\mu}_{0,\zeta}(\sigma).
\end{align*}
Now it is easy to check using $v=w^{-1}\sigma$ and $r\weylvec-\weylvec+w\weylvec=r\lambda$ that:
\begin{align*}
-(\weylvec,v(\zeta+\weylvec))
+(\lambda,r\sigma(\zeta+\weylvec))
+(\weylvec,\sigma(\zeta+\weylvec))
-(\mu,r\sigma(\zeta+\weylvec))
=0,\\
\frac{1}{2}r
\left( \|\lambda\|^2 - \|\mu\|^2 \right)
-(\lambda,w\weylvec)
+(\mu,\weylvec)
=0.
\end{align*}
This immediately implies the first relation.
The second relation follows by summing the first relation 
over $\sigma\in\fS_r$ and finally dividing by $(q;q)_\infty^{r-1}$.
\end{proof}

\subsection{The case \texorpdfstring{$p<r$}{p<r}}
The character formula is not valid unless $r\leq p,p'$ and $(p,p')=1$.
We now show that if $p<r$ then these characters evaluate to $0$ as $q$-series.
This is related to the fact that our formulas for $\la{sl}_r$ invariants of corresponding torus knots involve a lot of cancellations
and thus one needs to be careful in calculating their tails.

We have the following lemma which essentially drives all the relevant cancellations.
\begin{lem}
\label{lem:cancellations}
Let $1\leq p$ and let $w\in \fS_r$. We have the following cases.
\begin{enumerate}
	\item If $p< r$ and $u=w(1,p+1)$ we have $u\weylvec-w\weylvec\in pQ_r$ and $(-1)^{\ell(u)}\neq (-1)^{\ell{(w)}}$.
	\item If $p=r-1$ then $u\weylvec-w\weylvec\in (r-1)Q_r$ iff $u=w$ or $u=w(1,r)$. In the latter case we have
	$u\weylvec-w\weylvec=-(r-1)w\theta$ where $\theta=\alpha_1+\cdots+\alpha_{r-1}\in\Phi_r$ is the highest root.
	\item If $p\geq r$ then $u\weylvec-w\weylvec\in pQ_r$ iff $u=w$.
\end{enumerate}
\end{lem}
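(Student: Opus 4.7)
The plan is to reduce membership of $u\weylvec - w\weylvec$ in $pQ_r$ to a transparent combinatorial condition on the permutation $\pi := u^{-1}w$. Writing $\weylvec = \sum_i \bigl(\tfrac{r+1}{2}-i\bigr)\epsilon_i$, for any $v\in\fS_r$ the $j$-th coordinate of $v\weylvec$ in the $\epsilon$-basis equals $\weylvec_{v^{-1}(j)} = \tfrac{r+1}{2} - v^{-1}(j)$, so
\[
\bigl(u\weylvec - w\weylvec\bigr)_j \;=\; w^{-1}(j) - u^{-1}(j).
\]
This is an integer vector whose entries sum to $0$ (since $w^{-1}$ and $u^{-1}$ are both permutations of $\{1,\dots,r\}$), and so it lies in $Q_r$; it lies in $pQ_r$ precisely when $u^{-1}(j)\equiv w^{-1}(j) \pmod p$ for every $j$, which after the substitution $k = w^{-1}(j)$ is equivalent to $\pi(k)\equiv k\pmod p$ for every $k$. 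In other words, $u\weylvec - w\weylvec\in pQ_r$ if and only if $\pi$ permutes $\{1,\dots,r\}$ within residue classes modulo $p$.

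With this reformulation each case is a short combinatorial check based on which elements of $\{1,\dots,r\}$ share a residue modulo $p$. For $(1)$, $\pi = (1,p+1)$ and $p+1\equiv 1\pmod p$, so this single transposition is residue-preserving; being a transposition, it also flips the sign, giving $(-1)^{\ell(u)}=-(-1)^{\ell(w)}$. For $(3)$, with $p\ge r$ the elements $1,\dots,r$ are pairwise distinct modulo $p$, forcing $\pi=\id$, i.e.\ $u=w$. For $(2)$, with $p=r-1$ the only coincidence of residues in $\{1,\dots,r\}$ modulo $r-1$ is $r\equiv 1$; hence a residue-preserving $\pi$ must fix $2,3,\dots,r-1$ pointwise and either fix or swap $\{1,r\}$, so $\pi\in\{\id,(1,r)\}$, i.e.\ $u\in\{w,\,w(1,r)\}$.

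The claimed identity $u\weylvec - w\weylvec = -(r-1)w\theta$ in case $(2)$ is then a one-line calculation: swapping the first and last $\epsilon$-coefficients of $\weylvec$ yields $(1,r)\weylvec - \weylvec = -(r-1)(\epsilon_1 - \epsilon_r) = -(r-1)\theta$, and applying $w$ on the left gives the result. The only real step in the proof is the observation in the first paragraph that the $j$-th coordinate of $v\weylvec$ is an affine function of $v^{-1}(j)$ alone; once this is in hand, everything reduces to elementary divisibility on permutations of $\{1,\dots,r\}$, so I do not anticipate any genuine obstacle.
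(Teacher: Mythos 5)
Your proof is correct and follows essentially the same route as the paper: both arguments rest on the fact that the $\epsilon$-coordinates of $\weylvec$ form an arithmetic progression with step $1$, so that $u\weylvec-w\weylvec$ has integer coordinates given by differences of positions, and membership in $pQ_r$ becomes coordinate-wise divisibility by $p$ (the paper phrases this as ``pairs of coordinates of $\weylvec$ whose difference is divisible by $p$,'' you phrase it as $u^{-1}w$ preserving residue classes modulo $p$). Your packaging is slightly more uniform, handling all three cases from a single reformulation where the paper treats part (1) by direct computation of $(1,p+1)\weylvec-\weylvec$, but the underlying mechanism is identical.
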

\begin{proof}
For $0<p\leq r-1$, we have
\begin{align*}
(1,p+1)\weylvec-\weylvec=p(\epsilon_1-\epsilon_j)=p(\alpha_1+\cdots+\alpha_{j-1})\in pQ_r.
\end{align*}
This immediately proves the first part.
The only pair of \emph{different} coordinates of $\weylvec$ whose difference is an integer divisible by $r-1$ is $(r-1)\epsilon_1/2$ and $(1-r)\epsilon_r/2$.
Thus, if $p=r-1$, $\sigma\weylvec-\weylvec\in (r-1)Q_r$ iff $\sigma=1$ or $\sigma=(1,r)$. Consequently,
$u\weylvec-w\weylvec\in (r-1)Q_r$ iff $u=w$ or $u=w(1,r)$. 
The second part now follows easily.
For $p\geq r$, there is simply no way to subtract two different coordinates of $\weylvec$ to get an integer divisible by $p$.
Indeed, the biggest possible difference is $r-1$ when we swap $(r-1)/2$ with $(1-r)/2$. This implies the third part.
\end{proof}

We now have the following result.
\begin{prop}
\label{prop:p<r}
Let $1\leq p\leq r-1$, $\mu\in P_r$.
Then,
\begin{align*}
\overline{\chi}^{r,p,p';\,\mu}_{0,0}=0.
\end{align*}
\end{prop}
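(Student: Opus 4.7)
The plan is to exhibit a sign-reversing involution on $Q_r+\mu$ that collapses the sum defining $\overline{\chi}^{r,p,p';\,\mu}_{0,0}$ in pairs.

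First, I would switch the order of summation so that, for each $\alpha\in Q_r+\mu$, one sees the inner alternating sum
\[
T(\alpha):=\sum_{\sigma\in\fS_r}(-1)^{\ell(\sigma)}q^{p(\alpha,\sigma\weylvec)-(\weylvec,\sigma\weylvec-\weylvec)}
\]
weighted by $q^{\frac{pp'}{2}\|\alpha\|^2-p'(\alpha,\weylvec)}$. Rewriting $p(\alpha,\sigma\weylvec)-(\weylvec,\sigma\weylvec-\weylvec)=(p\alpha-\weylvec,\sigma\weylvec)+\|\weylvec\|^2$ shows that, up to a monomial, $T(\alpha)$ equals the Weyl-type sum $\sum_\sigma(-1)^{\ell(\sigma)}q^{(v,\sigma\weylvec)}$ with $v:=p\alpha-\weylvec$. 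Since the coordinates of $\weylvec$ are pairwise distinct, this sum is totally antisymmetric in the coordinates of $v$: it transforms by $\sgn(\pi)$ under $v\mapsto\pi v$ for any $\pi\in\fS_r$, and in particular vanishes whenever two coordinates of $v$ agree.

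Next, I would construct the involution from Lemma \ref{lem:cancellations}(1). Set $\tau=(1,p+1)$ and $\eta=\epsilon_1-\epsilon_{p+1}$, so that $\tau\weylvec-\weylvec=-p\eta$ and $\eta\in Q_r$ (the latter because $p\leq r-1$). Define $f:Q_r+\mu\to Q_r+\mu$ by $f(\alpha)=\tau\alpha+\eta$. Using $\tau\eta=-\eta$ one verifies $f^2=\id$; since $\tau\mu-\mu=(\mu_{p+1}-\mu_1)\eta\in\ZZ\eta\subset Q_r$ for every $\mu\in P_r$, the map $f$ preserves $Q_r+\mu$. The key identity is
\[
pf(\alpha)-\weylvec=\tau(p\alpha)-\tau\weylvec=\tau(p\alpha-\weylvec),
\]
so the antisymmetry of the Weyl-type sum gives $T(f(\alpha))=-T(\alpha)$.

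Then I would check that the quadratic weight is preserved under $f$: expanding $\frac{pp'}{2}\|\tau\alpha+\eta\|^2-p'(\tau\alpha+\eta,\weylvec)$ and using $\|\eta\|^2=2$, $(\weylvec,\eta)=p$, $\tau\eta=-\eta$, the cross terms $\pm pp'(\alpha,\eta)$ and constants $\pm pp'$ cancel to leave exactly $\frac{pp'}{2}\|\alpha\|^2-p'(\alpha,\weylvec)$. Fixed points of $f$ are those $\alpha$ with $\alpha_1-\alpha_{p+1}=1$, for which $v_1-v_{p+1}=p(\alpha_1-\alpha_{p+1})-(\weylvec_1-\weylvec_{p+1})=p-p=0$, so $T(\alpha)=0$ by antisymmetry. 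Pairing each non-fixed $\alpha$ with $f(\alpha)$ collapses $\sum_{\alpha}q^{\frac{pp'}{2}\|\alpha\|^2-p'(\alpha,\weylvec)}T(\alpha)$ to zero, yielding $\overline{\chi}^{r,p,p';\,\mu}_{0,0}=0$. The main subtlety is the $f$-invariance of the quadratic prefactor: neither the translation by $\eta$ nor the $\tau$-action preserves it individually, and the fact that they together do is precisely the numerical identity $(\weylvec,\eta)=p$, which is Lemma \ref{lem:cancellations}(1) in disguise.
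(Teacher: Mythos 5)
Your proof is correct and is essentially the paper's own argument in disguise: your sign-reversing involution, which at the level of individual terms is $(\alpha,\sigma)\mapsto(\tau\alpha+\eta,\,\tau\sigma)$ with $\tau=(1,p+1)$ and $p\eta=\weylvec-\tau\weylvec$, is exactly the change of variables by which the paper proves $\Gamma^{r,p,p';\,\mu}_{0,0}(w)=-\Gamma^{r,p,p';\,\tau\mu}_{0,0}(\tau w)$ and then concludes $\overline{\chi}^{r,p,p';\,\mu}_{0,0}=-\overline{\chi}^{r,p,p';\,\tau\mu}_{0,0}=-\overline{\chi}^{r,p,p';\,\mu}_{0,0}$ using $\tau\mu-\mu\in Q_r$; both hinge on the same key fact, Lemma \ref{lem:cancellations}(1). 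The only organizational difference is that the paper's pairing acts on pairs $(\alpha,w)$ and is automatically fixed-point-free since $\tau w\neq w$, whereas by collapsing the inner $\fS_r$-sum into $T(\alpha)$ first you acquire fixed points (those $\alpha$ with $\alpha_1-\alpha_{p+1}=1$), which you correctly dispose of via the antisymmetry of the Weyl-type sum.
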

\begin{proof}
We will show below that for any fixed $w\in\fS_r$,
\begin{align*}
\Gamma^{r,p,p';\,\mu}_{0,0}(w)
=-\Gamma^{r,p,p';\,(1,p+1)\mu}_{0,0}((1,p+1)\cdot w).
\end{align*}
This will imply, upon summing over all $w$ that:
\begin{align*}
\overline{\chi}^{r,p,p';\,(1,p+1)\mu}_{0,0}
=-\overline{\chi}^{r,p,p';\,\mu}_{0,0}.
\end{align*}
But, noting that for $\mu\in P_r$ we have $(1,p+1)\mu-\mu\in Q_r$,
we also have:
\begin{align*}
\overline{\chi}^{r,p,p';\,(1,p+1)\mu}_{0,0}
=\overline{\chi}^{r,p,p';\,\mu}_{0,0},
\end{align*}
which now gives the required result.

Fix $w\in \fS_r$. We write $v=w^{-1}(1,p+1)$ and note that $(-1)^{\ell(v)}=-(-1)^{\ell(w)}$.
Further, from Lemma \ref{lem:cancellations}, we have $(1,p+1)\weylvec-\weylvec\in pQ_r$.
We may thus write 
\begin{align*}
(1,p+1)\weylvec-\weylvec=p\lambda
\end{align*}
for some $\lambda\in Q_r$.
We have:
\begin{align*}
\Gamma^{r,p,p';\,\mu}_{0,0}(w)
&=
\sum_{\alpha\in Q_r+\mu}
(-1)^{\ell(w)}q^{
\frac{1}{2}pp'
\|\alpha\|^2 
- p'(\alpha,\weylvec)
+p(\alpha,w\weylvec)
- (\weylvec,w\weylvec-\weylvec)
}
\\
&=
\sum_{\alpha\in Q_r+\mu}
(-1)^{\ell(w)}q^{
\frac{p'}{2p}
\| pw^{-1}\alpha-w^{-1}\weylvec\|^2 
+(pw^{-1}\alpha-w^{-1}\weylvec,\weylvec)
+\|\weylvec\|^2
-\frac{p'}{2p}\|\weylvec\|^2
}\\
&=
-\sum_{\alpha\in Q_r+\mu}
(-1)^{\ell(v)}q^{
\frac{p'}{2p}
\| pw^{-1}(\alpha+\lambda)-v\weylvec\|^2 
+(pw^{-1}(\alpha+\lambda)-v\weylvec,\weylvec)
+\|\weylvec\|^2
-\frac{p'}{2p}\|\weylvec\|^2
}\\
&=
-\sum_{\alpha\in Q_r+w^{-1}\mu}
(-1)^{\ell(v)}q^{
\frac{p'}{2p}
\| p\alpha-v\weylvec\|^2 
+(p\alpha-v\weylvec,\weylvec)
+\|\weylvec\|^2
-\frac{p'}{2p}\|\weylvec\|^2
}\\
&=
-\sum_{\alpha\in Q_r+v^{-1}w^{-1}\mu}
(-1)^{\ell(v)}q^{
\frac{p'}{2p}
\| pv\alpha-v\weylvec\|^2 
+(pv\alpha-v\weylvec,\weylvec)
+\|\weylvec\|^2
-\frac{p'}{2p}\|\weylvec\|^2
}\\
&=-\Gamma^{r,p,p';\,(1,p+1)\mu}_{0,0}((1,p+1)w).
\end{align*}
\end{proof}

\section{Torus knots and the Rosso--Jones formula}
\label{sec:RJ}

Fix the Lie algebra $\la{sl}_r$.  

Let $\sR$ denote the Grothendieck ring over $\CC$ of finite dimensional representations of $\la{sl}_r$.
As a vector space over $\CC$ it has a basis of finite dimensional irreducible modules, but as an algebra over $\CC$, it is generated by those modules whose highest weights are the fundamental weights.
This ring is isomorphic to the ring $\ch(\sR)=\left(\CC[x_1,\cdots,x_r]/(x_1\cdots x_r-1)\right)^{\fS_r}$ where $\fS_r$ acts by permuting the variables. Naturally, the isomorphism is provided by considering characters of the modules.
Let us also consider the bigger ring $\widehat{\ch}(\sR)=\CC[x_1,\cdots,x_r]/(x_1\cdots x_r-1)$ which is isomorphic to the group algebra of $P_r$.
This isomorphism is as follows. Given a $\lambda=a_1\Lambda_1+\cdots+a_r\Lambda_{r-1}\in P_r$, 
the corresponding monomial in $\widehat{\ch}(\sR)$ is:
\begin{align}
\mathbf{x}^\lambda = \prod_{i=1}^{r-1}x_i^{a_i+a_{i+1}+\cdots+a_{r-1}}.
\end{align}
The characters of (finite-dimensional) irreducible modules are given by Weyl's character formula:
\begin{align*}
\ch(L_r(\lambda))=\dfrac{\sum_{w\in\fS_r}(-1)^{\ell(w)} \mathbf{x}^{w(\lambda+\weylvec)}}{\sum_{w\in\fS_r}(-1)^{\ell(w)} \mathbf{x}^{w\weylvec}}
=\dfrac{N(\lambda)}{\Delta_r}
\end{align*}
The numerator and denominator of this formula do not belong to $\ch(\sR)$ as they are alternating functions under the action of $\fS_r$.
They do, however, naturally live in the the bigger ring $\widehat{\ch}(\sR)$
since it is isomorphic to the group algebra of $P_r$.

For a dominant integral weight $\lambda$
and a knot $K$ along with a framing $f$,
let $J^f_K(L_r(\lambda))$ denote the (framing dependent) Reshetikhin-Turaev invariant
of $K$ with strands corresponding to $L_r(\lambda)$. Let $J^u_K(L_r(\lambda))$
denote the (framing independent) coloured Jones invariant, i.e., the framing of $K$ 
is altered to zero by introducing appropriate number of twists/curls.
Further let $J^{f,\mathbbm{1}}_K(L_r(\lambda))$ (resp.\ $J^{u,\mathbbm{1}}_K(L_r(\lambda))$) denote the 
framing dependent (resp.\ framing independent)
normalized invariant that satisfies
$J^{f,\mathbbm{1}}_\circ( L_r(\lambda) )=1$ (resp.\ $J^{u,\mathbbm{1}}_\circ( L_r(\lambda) )=1$) where $\circ$ is the unknot.

For a knot $K$, we will view $J^f_K, J^u_K, J^{f,\mathbbm{1}}_K, J^{u,\mathbbm{1}}_K$ as functions 
\begin{align*}
J^\bullet_K : \mathscr{R}&\rightarrow \ZZ[q^t,q^{-t}]\\
L_r(\lambda)&\mapsto J^\bullet_K(L_r(\lambda)).
\end{align*}
where $t$ is certain appropriate fraction.
We will now use this upgraded notation where the argument of $J^\bullet_K$ is any element of $\sR$.
By an abuse of the notation, we also use $J^\bullet_K$ to denote corresponding maps on $\ch(\sR)$.

Let $p<p'$ be a pair of coprime positive integers.

Given a knot $K$ with framing $f$, $K{(p,p')}$ denotes the $(p,p')$ cabling of $K$, with framing $\overline{f}$ inherited from 
the framing $f$ of $K$.
Rosso--Jones formula relates coloured $\la{sl}_r$ invariants of $K(p,p')$ with that of $K$.
Two functions on $\sR$ enter into the Rosso--Jones formula \cite{RosJon-torus}, as explained by \cite{Mor-coloured}.
The first is the \emph{twist} which is the action of ribbon element on 
the modules. 
On (finite-dimensional) irreducible modules, it acts by a scalar, given by ($\lambda\in P_r^+$):
\begin{align*}
\theta_{L_r(\lambda)}= q^{\frac{1}{2}(\lambda, \lambda+2\weylvec)}.
\end{align*}
Second map is the \emph{$p$th Adams operation}. 
On $\ch(\sR)$ it acts by:
\begin{align*}
\psi_p(x_i) = x_i^p
\end{align*}
Since $\sR$ and $\ch(\sR)$ are isomorphic, we will use the same notation $\psi_p$ to denote the operation on $\sR$.
In fact, $\psi_p$ is also well-defined on the bigger ring $\widehat{\ch}(\sR)$.

Now, the Rosso--Jones \cite{RosJon-torus} formula, as explained in \cite{Mor-coloured}, states:
\begin{thm}
Let $p,p'$ be a pair of positive coprime integers. As maps on $\sR$ we have:
\begin{align}
J_{K(p,p')}^{\overline{f}} = J_K^f\circ \theta^{p'/p}\circ \psi_p.
\end{align}
\end{thm}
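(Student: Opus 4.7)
The plan is to realize $K(p,p')$ as a satellite of $K$ and reduce the formula to identifying a single map $\Phi_P\colon \sR \rightarrow \sR$ attached to the $(p,p')$-torus pattern $P$ in a standard solid torus. Explicitly, $K(p,p')$ is the image of $P$ under an embedding of the solid torus as a tubular neighborhood of $K$ chosen compatibly with the framing $f$, and $\overline{f}$ is the inherited framing. By the general satellite principle for Reshetikhin--Turaev invariants, there is a well-defined map $\Phi_P\colon \sR \rightarrow \sR$ such that $J^{\overline{f}}_{K(p,p')} = J^f_K \circ \Phi_P$, so the theorem reduces to the identity $\Phi_P = \theta^{p'/p}\circ \psi_p$.

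First I would compute the ``Adams part''. Model $P$ as the closure of the braid $(\sigma_1\sigma_2\cdots\sigma_{p-1})^{p'}$ on $p$ strands, each colored by an irreducible $L_r(\lambda)$. Ignoring the braid, $p$ parallel longitudes of the solid torus colored by $L_r(\lambda)$ and glued cyclically into a single knot yield, on the level of characters, exactly the Adams operation: the graded trace of the cyclic shift on $L_r(\lambda)^{\otimes p}$ equals $\ch(L_r(\lambda))(x_1^p,\ldots,x_r^p)=\psi_p(\ch(L_r(\lambda)))$, by the standard identity relating cyclic closures of $p$ parallel strands to power-sum substitutions of eigenvalues. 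This produces the $\psi_p$ factor and simultaneously furnishes the decomposition $\psi_p(L_r(\lambda))=\sum_\mu n_\mu L_r(\mu)$ as an integral combination of irreducibles in $\sR$.

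Next I would account for the twist. The braid $(\sigma_1\cdots\sigma_{p-1})^{p'}$ is (up to a framing shift) the $p'/p$-fractional power of the full twist on $p$ strands and, through the ribbon structure, acts on each isotypic summand $L_r(\mu) \subset L_r(\lambda)^{\otimes p}$ by a scalar. Combining the well-known eigenvalue of the full twist on $L_r(\mu)$, namely $\theta_{L_r(\mu)}\theta_{L_r(\lambda)}^{-p}$, with the correction converting $f$ to the blackboard framing $\overline{f}$, one finds that the net action on each $L_r(\mu)$ appearing in $\psi_p(L_r(\lambda))$ is multiplication by $\theta_{L_r(\mu)}^{p'/p}$. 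This is precisely the action of $\theta^{p'/p} \circ \psi_p$ evaluated on $L_r(\lambda)$.

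The main obstacle is that $\theta^{p'/p}$ is not a priori well-defined on $\sR$, since one needs a canonical $p$-th root of $\theta_{L_r(\mu)}$. The resolution is that $\theta^{p'/p}$ is only evaluated on the image of $\psi_p$, and the twist eigenvalues on irreducibles appearing in $\psi_p(L_r(\lambda))$ are $p$-th powers in a canonical way forced by the decomposition of $L_r(\lambda)^{\otimes p}$. Organizing this eigenvalue bookkeeping carefully---an extension of the Rosso--Jones argument as recast by Morton---is the technical heart of the proof.
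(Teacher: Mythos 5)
You should first note a mismatch of premise: the paper does not prove this theorem at all. It is quoted verbatim as the Rosso--Jones formula, ``as explained in'' Morton, with citations standing in for the proof. So there is no internal argument to compare against, and your proposal has to stand on its own as a reconstruction of the cited argument. Your skeleton --- the satellite principle $J^{\overline{f}}_{K(p,p')}=J^f_K\circ\Phi_P$ followed by identification of the cabling morphism $\Phi_P$ of the $(p,p')$ torus pattern with $\theta^{p'/p}\circ\psi_p$ --- is indeed the route taken by Rosso--Jones and Morton, and your two ingredients (the classical cyclic-shift trace identity giving $\psi_p$, and the full-twist eigenvalue $\theta_{L_r(\mu)}\theta_{L_r(\lambda)}^{-p}$) are both correct facts.

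The genuine gap is the sentence claiming that $(\sigma_1\cdots\sigma_{p-1})^{p'}$ ``acts on each isotypic summand $L_r(\mu)\subset L_r(\lambda)^{\otimes p}$ by a scalar.'' This is false whenever $p\nmid p'$. Only the full twist $(\sigma_1\cdots\sigma_{p-1})^{p}$ is central in the braid group $B_p$ and therefore acts on the $\mu$-isotypic component by the scalar $\theta_{L_r(\mu)}\theta_{L_r(\lambda)}^{-p}$; the braid $(\sigma_1\cdots\sigma_{p-1})^{p'}$ merely commutes with the quantum group action, so it acts on the multiplicity space $\mathrm{Hom}(L_r(\mu),L_r(\lambda)^{\otimes p})$ by an operator whose $p$-th power is that scalar raised to $p'$. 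Its eigenvalues are thus $\zeta\cdot\bigl(\theta_{L_r(\mu)}\theta_{L_r(\lambda)}^{-p}\bigr)^{p'/p}$ with $\zeta$ ranging over $p$-th roots of unity, generally with several distinct $\zeta$ occurring on a single multiplicity space. A concrete counterexample to your claim: for $r=3$, $p=3$, $\lambda=\Lambda_1$, the weight $\Lambda_1+\Lambda_2$ occurs with multiplicity $2$ in $L_3(\Lambda_1)^{\otimes 3}$, the rotation braid acts on that two-dimensional multiplicity space with the two primitive cube roots of unity as normalized eigenvalues, and their sum $-1$ is exactly the Adams coefficient in $\psi_3(\ch L_3(\Lambda_1))=\ch L_3(3\Lambda_1)-\ch L_3(\Lambda_1+\Lambda_2)+\ch L_3(0)$. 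The entire content of the theorem is to prove that the trace of the rotation braid's $p'$-th power over each multiplicity space equals $m^\mu_{\lambda,p}$ times the fractional twist power, where the (possibly negative) integer $m^\mu_{\lambda,p}$ arises as such a sum of roots of unity; this requires a rigidity argument --- the eigenvalue multiplicities are $q$-independent integers, so they may be computed at $q=1$, where the rotation degenerates to the cyclic permutation and your power-sum identity applies. That bridge is precisely what you defer as ``eigenvalue bookkeeping''; it is not bookkeeping, it is the theorem. By contrast, the obstacle you do flag --- well-definedness of $\theta^{p'/p}$ --- is a red herring in this setting: since $\theta_{L_r(\mu)}=q^{\frac{1}{2}(\mu,\mu+2\weylvec)}$ is an explicit power of $q$ and the invariants are allowed values in $\ZZ[q^t,q^{-t}]$ with $t$ fractional, the fractional power is defined formally on every irreducible, not only on the image of $\psi_p$.
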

Let us specialize to the case $K=\circ$, the unknot with zero framing $u$.
In this case,  $\circ(p,p')$ gives rise to a framed torus knot $T(p,p')$ with writhe $pp'$. Call this framing $\overline{u}$.

Now, $J_\circ^u$ acts on the ring $\sR$ simply as the \emph{quantum dimension}.
That is,
\begin{align}
J_\circ^u(L_r(\lambda))=\qdim(L_r(\lambda))=
\prod_{\alpha\in\Phi_r^+}\dfrac{q^{\frac{1}{2}(\lambda+\weylvec,\alpha)}-q^{-\frac{1}{2}(\lambda+\weylvec,\alpha)}}{q^{\frac{1}{2}(\weylvec,\alpha)}-q^{-\frac{1}{2}(\weylvec,\alpha)}}.
\end{align}
Actually, this is known as the principal specialization of the character, and on $\widehat{\ch}(\sR)$ it can be alternatively defined by:
\begin{align}
\qdim(\mathbf{x}^\lambda)= q^{(\lambda,\delta)}.
\label{eqn:qdimx}
\end{align}

Let $\lambda$ be a dominant integral weight, and suppose that
\begin{align}
\psi_p(L_r(\lambda)) = \sum_{\mu\in P^+_r} m^{\mu}_{\lambda,p} L_r(\mu).
\end{align}
Only finitely many $m^{\mu}_{\lambda,p}\in\CC$ are non-zero and they are in fact all integers.
Then, we have:
\begin{align}
J_{T(p,p')}^{\overline{u}}(L_r(\lambda)) = \sum_{\mu\in P^+} m^\mu_{\lambda,p}\cdot \qdim(L_r(\mu)) \cdot \theta_{L_r(\mu)}^{p'/p}.
\label{eqn:torus_fd_unnorm}
\end{align}
If we now alter the framing to have writhe $0$, and then normalize, we have:
\begin{align*}
J_{T(p,p')}^{u}(L_r(\lambda)) = \theta_{L_r(\lambda)}^{-pp'}\sum_{\mu\in P^+} m^\mu_{\lambda,p}\cdot \qdim(L_r(\mu))\cdot\theta_{L_r(\mu)}^{p'/p},\\
J_{T(p,p')}^{u,\mathbbm{1}}(L_r(\lambda)) = \dfrac{\theta_{L_r(\lambda)}^{-pp'}}{\qdim(L_r(\lambda))}\sum_{\mu\in P^+} m^\mu_{\lambda,p}\cdot \qdim(L_r(\mu))\cdot\theta_{L_r(\mu)}^{p'/p}.
\end{align*}

\section{\texorpdfstring{$L_r(n\Lambda_1)$}{Lr(nLambda1)}-coloured Jones polynomials}
\label{sec:poly}
We now calculate $J^{\overline{u}}_{T(p,p')}(L_r(n\Lambda_1))$.
Recall that $u$ denotes the zero framing of the unknot and $\overline{u}$ denotes the inherited framing
on the $(p,p')$ cabling $T(p,p')$.
For simplicity, we shall omit the $\overline{u}$.

We begin by understanding and extending various components of the formula \eqref{eqn:torus_fd_unnorm}.

\subsection{Extending the twist to \texorpdfstring{$P$}{P}}
First is the twist. If we view $\frac{1}{2}(\lambda,\lambda+2\weylvec)$ 
as a polynomial function of $\lambda\in\la{h}^*$, it is not invariant with respect to the Weyl group.
However, for $\lambda\in P_r^+$, consider $\lambda'=\lambda+\weylvec$. 
Then, $\lambda'\in P_r^\circ$  and we have:
$\frac{1}{2}(\lambda,\lambda+2\weylvec)=\frac{1}{2}(\lambda'-\weylvec,\lambda'+\weylvec)=\frac{1}{2}(\lambda',\lambda')-\frac{1}{2}(\weylvec,\weylvec)$.
Now, extended to all of $\lambda'\in\la{h}^*$, this is clearly $\fS_r$ invariant.
We thus define:
\begin{align}
\Theta_{\lambda'} = q^{\frac{1}{2}(\lambda',\lambda')-\frac{1}{2}(\weylvec,\weylvec)}
\label{eqn:ThetaFormula}
\end{align}
and we have:
\begin{align}
\Theta_{w\lambda'}&=\Theta_{\lambda'},\label{eqn:Thetaweyl}\\
\theta_{L_r(\lambda)} &= \Theta_{\lambda+\weylvec}\label{eqn:thetaTheta}
\end{align}
for all $\lambda'\in\la{h}^*$ (we really only require $\lambda'\in P$) and all $\lambda\in P^+_r$.

\subsection{Extending the plethysm multiplicities to \texorpdfstring{$P_r$}{Pr}} 
Let $\lambda\in P_r^+$  and 
let 
\begin{align}
\psi_p(\ch(L_r(\lambda)))&=\sum_{\mu\in P^+} m^\mu_{\lambda,p}
\ch(L_r(\mu))
=\dfrac{1}{\Delta_r}\sum_{\mu\in P^+_r}\sum_{w\in \fS_r}(-1)^{\ell(w)} m^\mu_{\lambda,p}\mathbf{x}^{w(\mu+\weylvec)}.
\end{align}
Given any $\mu'\in P^\circ_r$ (so that $\mu=\mu'-\weylvec\in P^+_r$), we define a new function
\begin{align}
M^{\mu'}_{\lambda,p}=m^{\mu'-\weylvec}_{\lambda,p},
\label{eqn:Mshift}
\end{align}
and extend this to an alternating function on the union of Weyl translates of $P^\circ_r$ by
\begin{align}
M^{\mu''}_{\lambda,p}=(-1)^{\ell(w)}m^{\mu'-\weylvec}_{\lambda,p},
\label{eqn:MWeyl}
\end{align}
where $\mu''\in \fS_rP^\circ_r$, $w\in \fS_r$, $\mu'\in P^\circ_r$ such that $\mu''=w\mu'$.
We further define 
\begin{align}
M^{\mu''}_{\lambda,p}=0
\label{eqn:Mboundary}
\end{align} 
whenever $\mu''\in P_r \backslash \fS P^\circ_r$, which is the unique way to extend this 
function on all of $P_r$ so that it remains alternating.
We thus have:
\begin{align}
\psi_p(\ch(L_r(\lambda)))
&=\dfrac{1}{\Delta_r}\sum_{\mu''\in P_r} M^{\mu''}_{\lambda,p}\mathbf{x}^{\mu''}.
\label{eqn:extendmu}
\end{align}

Now we specialize to $\lambda=n\Lambda_1$. 
The crucial fact driving all of our calculations is that each weight of $L_r(n\Lambda_1)$ appears with multiplicity $1$. 
On the one hand we have:
\begin{align}
\psi_p(\ch(L_r(n\Lambda_1)))=\sum_{\lambda\in \Pi_{r,n}} \mathbf{x}^{p\lambda}.
\end{align}
On the other hand, we have \eqref{eqn:extendmu}.
Combining these two, the  numbers
$M^{\mu}_{n\Lambda_1,p}$, and in turn the numbers $m^\mu_{n\Lambda_1,p}$ are determined by the equation:
\begin{align*}
\Delta_r\cdot \psi_p(\ch(L_r(n\Lambda_1)))
=\Delta_r\sum_{\lambda\in \Pi_{r,n}} \mathbf{x}^{p\lambda} = \sum_{\mu\in P} M^{\mu}_{n\Lambda_1,p}\mathbf{x}^\mu.
\end{align*}
We may also write 
\begin{align}
\Delta_r\sum_{\lambda\in \Pi_{r,n}} \mathbf{x}^{p\lambda}
=\sum_{\lambda\in \Pi_{r,n},w\in \fS_r} (-1)^{\ell(w)}\mathbf{x}^{p\lambda+w\weylvec},
\end{align}
using the explicit expression for the Weyl denominator $\Delta_r$.
This finally gives us:
\begin{align}
\sum_{\mu\in P_r} M^{\mu}_{n\Lambda_1,p}\mathbf{x}^\mu
=\sum_{\lambda\in \Pi_{r,n},w\in \fS_r} (-1)^{\ell(w)}\mathbf{x}^{p\lambda+w\weylvec}.
\label{eqn:Mformula}
\end{align}

\subsection{Jones polynomials}
We now combine various extensions above with the Rosso--Jones formula to 
finally deduce our formula for the $L_r(n\Lambda_1)$ coloured invariants
of the torus knots. The derivation is a straight-forward generalization from the 
$\la{sl}_2$ case previously studied by Morton \cite{Mor-coloured}.
\begin{thm}
Let $p,p'$ be a pair of positive coprime integers.
We have the following formula for the framing dependent (with framing inherited from $p,p'$ cabling of $\circ$) un-normalized invariant.
\begin{align}
J_{T(p,p')}(L_r(n\Lambda_1))
&=
\dfrac{q^{-\frac{p'}{2p}\|\weylvec\|^2}}{\qdim(\Delta_r)}
\sum_{\lambda\in \Pi_{r,n},w\in\fS_r}(-1)^{\ell(w)}
q^{\frac{p'}{2p}\| p\lambda+w\weylvec\|^2+(p\lambda+w\weylvec,\weylvec)}.
\label{eqn:mainjones}
\end{align}

\end{thm}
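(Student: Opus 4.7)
The plan is to start from the Rosso--Jones specialization \eqref{eqn:torus_fd_unnorm} for the unknot and systematically reorganize all three factors (the plethysm multiplicities, the quantum dimensions, and the twists) so that the Weyl group folds cleanly and we can substitute the explicit formula \eqref{eqn:Mformula}. The key observation driving the whole argument is that once the twist is rewritten in terms of the Weyl-invariant $\Theta_{\mu+\weylvec}$ of \eqref{eqn:ThetaFormula} and the quantum dimension is expanded via Weyl's character formula, the summand is manifestly alternating and Weyl-invariant in the right pieces, so the restriction $\mu\in P_r^+$ can be removed.

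Concretely, first I would apply \eqref{eqn:thetaTheta} and write
\[
\theta_{L_r(\mu)}^{p'/p}=q^{\frac{p'}{2p}\|\mu+\weylvec\|^2-\frac{p'}{2p}\|\weylvec\|^2},
\]
and I would use Weyl's character formula together with \eqref{eqn:qdimx} to express
\[
\qdim(L_r(\mu))=\frac{1}{\qdim(\Delta_r)}\sum_{w\in\fS_r}(-1)^{\ell(w)}q^{(w(\mu+\weylvec),\weylvec)}.
\]
Substituting both into \eqref{eqn:torus_fd_unnorm} and pulling out the factor $q^{-\frac{p'}{2p}\|\weylvec\|^2}/\qdim(\Delta_r)$, the remaining sum reads
\[
\sum_{\mu\in P_r^+}\sum_{w\in\fS_r}m^\mu_{n\Lambda_1,p}(-1)^{\ell(w)}q^{\frac{p'}{2p}\|\mu+\weylvec\|^2+(w(\mu+\weylvec),\weylvec)}.
\]

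Next I would reindex by setting $\mu'=\mu+\weylvec\in P_r^\circ$ so that $m^\mu_{n\Lambda_1,p}=M^{\mu'}_{n\Lambda_1,p}$ by \eqref{eqn:Mshift}, and then perform the change of variable $\mu''=w\mu'$ inside the Weyl sum. Using \eqref{eqn:MWeyl} we have $(-1)^{\ell(w)}M^{\mu'}_{n\Lambda_1,p}=M^{\mu''}_{n\Lambda_1,p}$, while $\|\mu'\|^2=\|\mu''\|^2$ is Weyl-invariant, so the double sum collapses to
\[
\sum_{\mu''\in\fS_r P_r^\circ}M^{\mu''}_{n\Lambda_1,p}\,q^{\frac{p'}{2p}\|\mu''\|^2+(\mu'',\weylvec)}.
\]
By \eqref{eqn:Mboundary} the extension $M^{\mu''}_{n\Lambda_1,p}=0$ on $P_r\setminus\fS_r P_r^\circ$ lets me replace the range of summation by all of $P_r$ at no cost.

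Finally I would invoke \eqref{eqn:Mformula}, which expresses $\sum_{\mu''\in P_r}M^{\mu''}_{n\Lambda_1,p}\mathbf{x}^{\mu''}$ as $\sum_{\lambda\in\Pi_{r,n},w\in\fS_r}(-1)^{\ell(w)}\mathbf{x}^{p\lambda+w\weylvec}$. Since the exponent $\frac{p'}{2p}\|\mu''\|^2+(\mu'',\weylvec)$ is a quadratic-plus-linear function of $\mu''$, the substitution $\mu''\mapsto p\lambda+w\weylvec$ yields precisely the claimed expression \eqref{eqn:mainjones}.

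The only delicate point, and the one I would flag as the main obstacle to watch, is the compatibility of the Weyl-unfolding step with the boundary convention \eqref{eqn:Mboundary}: one must verify that the summand $q^{\frac{p'}{2p}\|\mu''\|^2+(\mu'',\weylvec)}$ together with the alternating behavior of $M^{\mu''}_{n\Lambda_1,p}$ does indeed give a well-defined sum over $P_r$ (i.e.\ that nothing in the rearrangement secretly uses $\mu''$ to be in the interior). Once this is checked, everything else is bookkeeping driven by the three auxiliary tools already established: the extended twist, the extended plethysm multiplicities, and the Weyl character formula.
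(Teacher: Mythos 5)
Your proof is correct and follows essentially the same route as the paper: both start from \eqref{eqn:torus_fd_unnorm}, use the extensions \eqref{eqn:thetaTheta}, \eqref{eqn:Mshift}--\eqref{eqn:Mboundary} to unfold the sum from $P_r^+$ to all of $P_r$, and then substitute \eqref{eqn:Mformula}. The only cosmetic difference is that you apply the principal specialization $\qdim$ at the outset and work with $q$-powers throughout, whereas the paper carries everything inside $\qdim(\cdot)$ as elements of the character ring and specializes via \eqref{eqn:qdimx} and \eqref{eqn:ThetaFormula} only in the last step; the content is identical.
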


\begin{proof}
We have:
\begin{align*}
J_{T(p,p')}(L_r(n\Lambda_1))&=
\sum_{\mu\in P^+_r}m^\mu_{n\Lambda_1,p}\cdot \qdim(L_r(\mu))\cdot \theta_{L_r(\mu)}^{p'/p}\\
&=\qdim\left(\sum_{\mu'\in P^\circ_r}m^{\mu'-\weylvec}_{n\Lambda_1,p}\cdot \ch(L_r(\mu'-\weylvec))\cdot \theta_{L_r(\mu'-\weylvec)}^{p'/p}\right)\\
&=\qdim\left(\frac{1}{\Delta_r}\sum_{\mu'\in P^\circ_r}\sum_{w\in\fS_r}(-1)^{\ell(w)}m^{\mu'-\weylvec}_{n\Lambda_1,p}\cdot \mathbf{x}^{w\mu'}\cdot \theta_{L_r(\mu'-\weylvec)}^{p'/p}\right)\\
&\overset{\eqref{eqn:thetaTheta}, \eqref{eqn:MWeyl}}{=}\qdim\left(\frac{1}{\Delta_r}\sum_{\mu\in \fS_rP^\circ_r}M^{\mu}_{n\Lambda_1,p}\cdot \mathbf{x}^{w\mu}\cdot \Theta_{\mu}^{p'/p}\right)\\
&\overset{\eqref{eqn:Mboundary}}{=}\qdim\left(\frac{1}{\Delta_r}\sum_{\mu\in P_r}M^{\mu}_{n\Lambda_1,p}\cdot \mathbf{x}^{w\mu}\cdot \Theta_{\mu}^{p'/p}\right)\\
&\overset{\eqref{eqn:Mformula}}{=}\qdim\left(\frac{1}{\Delta_r}\sum_{\lambda\in\Pi_{r,n},w\in \fS_r}(-1)^{\ell(w)}\mathbf{x}^{p\lambda+w\weylvec}\cdot \Theta_{p\lambda+w\weylvec}^{p'/p}\right)\\
&\overset{\eqref{eqn:qdimx}, \eqref{eqn:ThetaFormula}}{=}\frac{q^{-\frac{p'}{2p}\|\weylvec\|^2}}{\qdim(\Delta_r)}
\sum_{\lambda\in\Pi_{r,n},w\in \fS_r}(-1)^{\ell(w)}q^{(p\lambda+w\weylvec,\weylvec)+\frac{p'}{2p}\|p\lambda+w\weylvec\|^2}.
\end{align*}
\end{proof}

\section{Limits}
\label{sec:limits}

In this section, we calculate the limits of the Jones polynomials that we have found.
We expect these to equal the tails when $r\leq p$.
Recall that $J_K$ without any superscripts is the framing dependent and non-normalized invariant. 

\subsection{The case \texorpdfstring{$r\leq p$}{r<=p}}

There exists a unique element $w_0\in \fS_r$ such that $w_0(\Phi_r^+)=\Phi_r^-$, and  
in particular $w_0\weylvec=-\weylvec$. With respect to the simple transpositions $(i,i+1)$
this is the longest element of $\fS_r$ and its length is $\ell(w_0)=\vert \Phi_r^+\vert$.
\begin{thm}
Let $p,p'$ be coprime positive integers. Let $j$ be such that $0\leq j\leq r-1$.
Then, we have:
\begin{align}
\lim\limits_{n\rightarrow\infty} &J_{T(p,p')}(L_r((j+nr)\Lambda_1))
=
\dfrac{(q;q)^{r-1}_\infty}
{\prod_{\alpha\in\Phi_r^+}
(1-q^{(\alpha,\weylvec)})}
\cdot
\overline{\chi}^{r,p,p';\,j\Lambda_1}_{0,0}
\label{eqn:lim}
\end{align}
\end{thm}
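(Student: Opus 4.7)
The plan is to start from the explicit formula \eqref{eqn:mainjones}, take the coefficient-wise limit as $N\to\infty$, simplify the prefactor using the Weyl denominator formula, and then match the resulting theta-like sum to $\overline{\chi}^{r,p,p';j\Lambda_1}_{0,0}$ via two Weyl-group substitutions. First, I complete the square in the exponent, writing
$\frac{p'}{2p}\|p\lambda+w\weylvec\|^2+(p\lambda+w\weylvec,\weylvec)=\frac{pp'}{2}\|\lambda+w\weylvec/p+\weylvec/p'\|^2-\frac{p}{2p'}\|\weylvec\|^2$.
This grows quadratically in $\|\lambda\|$, so for each fixed power of $q$ only finitely many $\lambda$ contribute; and since $\Pi_{r,j+Nr}$ monotonically exhausts $j\Lambda_1+Q_r$ as $N\to\infty$ (by the union fact in Section \ref{sec:lie}), the sum in \eqref{eqn:mainjones} converges coefficient-wise to the analogous sum over $\lambda\in j\Lambda_1+Q_r$.

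Next, using the Weyl denominator formula together with $\sum_{\alpha\in\Phi_r^+}\alpha=2\weylvec$, I evaluate $\qdim(\Delta_r)=(-1)^{|\Phi_r^+|}q^{-\|\weylvec\|^2}\prod_{\alpha\in\Phi_r^+}(1-q^{(\alpha,\weylvec)})$, and a short algebraic identity shows the combined $q$-power in the limit prefactor equals $q^{-\frac{pp'}{2}\|\weylvec/p-\weylvec/p'\|^2}$. This matches the prefactor obtained by the same completing-the-square trick applied inside the defining sum for $\overline{\chi}^{r,p,p';j\Lambda_1}_{0,0}$ (the $\mu$-shifted analog of the second equality in \eqref{eqn:Wcharnumer}). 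After cancelling these matching factors, the theorem reduces to the theta-like identity
$(-1)^{|\Phi_r^+|}\sum_{\lambda,w}(-1)^{\ell(w)}q^{\frac{pp'}{2}\|\lambda+w\weylvec/p+\weylvec/p'\|^2}=\sum_{\alpha,\sigma}(-1)^{\ell(\sigma)}q^{\frac{pp'}{2}\|\alpha-\weylvec/p+\sigma\weylvec/p'\|^2}$,
with both sums over $(j\Lambda_1+Q_r)\times\fS_r$.

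To prove this last identity I make three substitutions on the LHS. First, set $w=\sigma w_0$: since $\ell(\sigma w_0)=\ell(w_0)-\ell(\sigma)$ and $\ell(w_0)=|\Phi_r^+|$, the sign collapses to $(-1)^{\ell(\sigma)}$ after cancelling the overall prefactor $(-1)^{|\Phi_r^+|}$, while $\sigma w_0\weylvec=-\sigma\weylvec$ turns the summand into $q^{\frac{pp'}{2}\|\lambda-\sigma\weylvec/p+\weylvec/p'\|^2}$. Second, by Weyl invariance of the form one has $\|\lambda-\sigma\weylvec/p+\weylvec/p'\|^2=\|\sigma^{-1}\lambda-\weylvec/p+\sigma^{-1}\weylvec/p'\|^2$, so the substitution $\alpha=\sigma^{-1}\lambda$ is a bijection of $j\Lambda_1+Q_r$ to itself, because the coset $j\Lambda_1+Q_r$ is $\fS_r$-invariant: indeed $\sigma\Lambda_1-\Lambda_1=\epsilon_{\sigma(1)}-\epsilon_1\in Q_r$. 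Third, the sign-preserving bijection $\sigma\mapsto\sigma^{-1}$ on $\fS_r$ converts $\sigma^{-1}\weylvec/p'$ into $\sigma\weylvec/p'$, producing the RHS.

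The main obstacle is careful sign bookkeeping through the substitution $w=\sigma w_0$, together with the verification that $j\Lambda_1+Q_r$ is $\fS_r$-invariant so that the inner substitution $\alpha=\sigma^{-1}\lambda$ is legal. The coefficient-wise convergence of the limit is immediate from the quadratic growth of the exponent in $\lambda$, and the prefactor reduction is a routine completion-of-squares computation.
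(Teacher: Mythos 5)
Your proof is correct and follows essentially the same route as the paper: the same chain of substitutions (twisting by $w_0$ via $w_0\weylvec=-\weylvec$, a Weyl rotation of the lattice variable, then inverting the group element), the same Weyl-denominator evaluation of the prefactor $\qdim(\Delta_r)$, and the same monotone exhaustion $\Pi_{r,j+nr}\nearrow j\Lambda_1+Q_r$ for coefficient-wise convergence. The only differences are organizational: you complete the square and match the theta-function form of the character (first line of \eqref{eqn:Wcharnumer}) after passing to the limit, verifying $\fS_r$-invariance of the coset $j\Lambda_1+Q_r$ along the way, whereas the paper performs the same substitutions on the finite sums over $\Pi_{r,n}$ (using Weyl invariance of the weight system) and compares the expanded exponent directly with the definition of $\overline{\chi}^{r,p,p';\,j\Lambda_1}_{0,0}$.
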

\begin{proof}
From \eqref{eqn:mainjones} we have:
\begin{align}
J_{T(p,p')}(L_r(n\Lambda_1))
&=\frac{1}{\qdim(\Delta_r)}
\sum_{\lambda\in\Pi_{r,n},w\in \fS_r}(-1)^{\ell(w)}q^{\frac{pp'}{2}\|\lambda\|^2+p'(\lambda,w\weylvec)+p(\lambda,\weylvec)+(w\weylvec,\weylvec)}
\nonumber\\
&=
\frac{(-1)^{\ell(w_0)}}{\qdim(\Delta_r)}
\sum_{\lambda\in \Pi_{r,n},w\in \fS_r}(-1)^{\ell(ww_0)}q^{\frac{pp'}{2}\|\lambda\|^2-p'(\lambda,ww_0\weylvec)+p(\lambda,\weylvec)-(ww_0\weylvec,\weylvec)}
\nonumber\\
&=
\frac{(-1)^{\ell(w_0)}}{\qdim(\Delta_r)}
\sum_{\lambda\in \Pi_{r,n},w\in \fS_r}(-1)^{\ell(w)}q^{\frac{pp'}{2}\|\lambda\|^2-p'(\lambda,w\weylvec)+p(\lambda,\weylvec)-(w\weylvec,\weylvec)}
\nonumber\\
&=
\frac{(-1)^{\ell(w_0)}}{\qdim(\Delta_r)}
\sum_{w\in \fS_r}
\sum_{\lambda\in w^{-1}(\Pi_{r,n})}(-1)^{\ell(w)}q^{\frac{pp'}{2}\|w\lambda\|^2-p'(\lambda,\weylvec)+p(w\lambda,\weylvec)-(w\weylvec,\weylvec)}
\nonumber\\
&=
\frac{(-1)^{\ell(w_0)}}{\qdim(\Delta_r)}
\sum_{w\in \fS_r}
\sum_{\lambda\in w^{-1}(\Pi_{r,n})}(-1)^{\ell(w^{-1})}q^{\frac{pp'}{2}\|\lambda\|^2-p'(\lambda,\weylvec)+p(\lambda,w^{-1}\weylvec)-(\weylvec,w^{-1}\weylvec)}
\nonumber\\
&=
\frac{(-1)^{\ell(w_0)}}{\qdim(\Delta_r)}
\sum_{w\in \fS_r,\lambda\in \Pi_{r,n}}(-1)^{\ell(w^{-1})}q^{\frac{pp'}{2}\|\lambda\|^2-p'(\lambda,\weylvec)+p(\lambda,w^{-1}\weylvec)-(\weylvec,w^{-1}\weylvec)}
\nonumber\\
&=
\frac{(-1)^{\ell(w_0)}q^{-\|\weylvec\|^2}}{\qdim(\Delta_r)}
\sum_{\lambda\in \Pi_{r,n},w\in \fS_r}(-1)^{\ell(w)}q^{\frac{pp'}{2}\|\lambda\|^2-p'(\lambda,\weylvec)+p(\lambda,w\weylvec)-(\weylvec,w\weylvec)+(\weylvec,\weylvec)}
\label{eqn:altjones}
\end{align}
Note that we further have:
\begin{align}
\dfrac{(-1)^{\ell(w_0)}q^{-\|\weylvec\|^2}}{\qdim(\Delta_r)}
=
\dfrac{(-1)^{\vert\Phi_r^+\vert}q^{-\frac{1}{2}\sum_{\alpha\in\Phi_r^+}(\alpha,\weylvec)}}
{\prod_{\alpha\in\Phi_r^+}
(q^{\frac{1}{2}(\alpha,\weylvec)}-q^{-\frac{1}{2}(\alpha,\weylvec)} )}
=
\dfrac{1}
{\prod_{\alpha\in\Phi_r^+}
(1-q^{(\alpha,\weylvec)})}.
\label{eqn:prefactor}
\end{align}
The required limit now follows by recalling that $\Pi_{r,i}\subseteq \Pi_{r,i+r}$ (for all $i\geq 0$),
$\bigcup_{n\geq 0}\Pi_{r,j+nr}=j\Lambda_1+Q_r$
and finally comparing with \eqref{eqn:Wnormchar}.
\end{proof}

\begin{rem}
\label{rem:jonesintegral}
Let $n=kr$ in \eqref{eqn:altjones} for some positive integer $k$. In this case, $\Pi_{r,rk}\subset Q_r$ 
since $r\Lambda_1\in Q_r$. So, $\frac{1}{2}\|\lambda\|^2\in \ZZ$ by evenness of $Q_r$. 
Additionally, $(\lambda,\weylvec), (\lambda,w\weylvec), (\weylvec,\weylvec-w\weylvec)\in \ZZ$.
Combined with \eqref{eqn:prefactor}, we now see that $J_{T(p,p')}(L_r(kr\Lambda_1))\in \ZZ[[q]]$.
\end{rem}


For a knot $K$, let $\widehat{J}_K^{\bullet}(L_r(n\Lambda_1))$ denote $J_K^{\bullet}(L_r(n\Lambda_1))$ 
divided by its trailing monomial
(where $\bullet$ stands for either framed or unframed, normalized or un-normalized invariant).
Note that framing has no effect on $\widehat{J}^\bullet$.

\begin{defi}
\label{defi:tail}
Fix $r\geq 2$, $a\geq 1$ and $0\leq b\leq a-1$.
We say that the sequence $\widehat{J}_K^{\bullet}(L_r((an+b)\Lambda_1))$ 
is $0$-stable if there exists
a power series  $f\in \ZZ[[q]]$ such that for all $n$,
\begin{align}
\widehat{J}_K^{\bullet}(L_r((an+b)\Lambda_1)) - f(q) \in q^{(an+b)+1}\ZZ[[q]].
\end{align}
In this case, we say that $f$ is the $0$-tail of the sequence $\widehat{J}_K^{\bullet}(L_r((an+b)\Lambda_1))$.
\end{defi}

Several comments are now in order, encapsulated in the following remark.

\begin{rem}
\label{rem:limits}
Fix $r\leq p<p'$ with $p,p'$ coprime.
\begin{enumerate}
\item When $j=0$, the limit \eqref{eqn:lim} involves $\overline{\chi}^{r,p,p'}_{0,0}$ which 
is modular \cite{DonLinNg}, \cite{Zhu} up to some factor $q^t$, since it
is the character of $\sW_r(p,p')$ which is in turn a
rational \cite{Ara-princrat} and $C_2$ cofinite VOA \cite{Ara-c2}.
Moreover, using \eqref{eqn:chi1plus} we see that the RHS of \eqref{eqn:lim} $\in 1+q\ZZ[[q]]$.
This means that for all but finitely many $n$, $J_{T(p,p')}(L_r(rn\Lambda_1))=\widehat{J}_{T(p,p')}(L_r(rn\Lambda_1))\in  1+q\ZZ[[q]]$.

\item \label{item:0tail} Let us continue to take $j=0$.
It is clear that:
\begin{align*}
\left(\lim_{n\rightarrow\infty}J_{T(p,p')}(L_r(nr\Lambda_1))\right) 
&- J_{T(p,p')}(L_r(nr\Lambda_1)) \\
&= \dfrac{q^{-\frac{p'}{2p}\|\weylvec\|^2}}{\qdim(\Delta_r)}\sum_{\lambda\in Q_r\backslash \Pi_{r,nr}}
\sum_
{w\in \fS_r}(-1)^{\ell(w)}q^{\frac{p'}{2p}\|p\lambda+w\weylvec\|^2+(p\lambda+w\weylvec,\weylvec)}.
\end{align*}
Now, since $\|\cdot\|^2$ is a positive definite quadratic form, the minimum $q$ degree of the right-hand side grows 
as a quadratic in $n$. This means that for all large enough $n$, $J_{T(p,p')}(L_r(nr\Lambda_1))$ matches the limit
\eqref{eqn:lim}
at least up to first $q^{g(n)}$ terms for some quadratic function $g$. In effect, for all large enough $n$,
we must in particular have a match for the first $nr$ terms. However, at the moment, we do not know how to establish
that this match works for all $n$ which would prove that the RHS of \eqref{eqn:lim} is indeed the $0$-tail.

\item For $j\neq 0$, the modular properties of $\overline{\chi}^{r,p,p';\,j\Lambda_1}_{0,0}$ are as yet unclear to us.
Moreover, the minimum degree of $\overline{\chi}^{r,p,p';\,j\Lambda_1}_{0,0}$ seems to depend on $r,p,p',j$.

\item In the special case $r=p$, \eqref{eqn:r=p} gives that the limit is essentially independent of $j$, up to a sign. This means
that in this case we have:
\begin{align}
\lim_{n\rightarrow\infty}\widehat{J}_{T(p,p')}(L_r(n\Lambda_1))=
\dfrac{(q;q)^{r-1}_\infty}
{\prod_{\alpha\in\Phi_r^+}
(1-q^{(\alpha,\weylvec)})}
\overline{\chi}^{r,r,p'}_{0,0}. 
\label{eqn:r=plim}
\end{align}
\item Continuing to take $r=p$, similar to point \ref{item:0tail} above, we expect the RHS of \eqref{eqn:r=plim}
to be the $0$-tail of the sequence 
$J_{T(p,p')}(L_r(n\Lambda_1))$. In this case, $n$ is not confined to multiples of $r$.
\item In the case of $\la{sl}_2$, there is another way in which $r=p=2$ case is somewhat special -- the coloured Jones polynomials
for $T(p,p')$ satisfy a second order difference equation which reduces to first order for the torus knots $T(2,2m+1)$ \cite{Hik-diff}.
\end{enumerate}
\end{rem}

\subsection{A case study for \texorpdfstring{$p<r$}{p<r}}
This case is quite a bit harder, since the limit \eqref{eqn:lim} in this case is $0$ due to Corollary \ref{prop:p<r}. 
This means that there is cancellation in the formula for the Jones polynomial
and it is quite tricky to pin point their minimum degrees. 
For general $p<r$, this appears to be a fairly involved problem, which we hope to address in the future.
For now, we analyze the case $p=2,r=3$ to ensure that our results match with those given in \cite{GarVuo}.
We expect several considerations of this example to generalize to higher $r$.
The following theorem gives the formula for $J_{T(2,p')}(L_3(n\Lambda_1))$ which agrees 
up to shifting, sign, framing and normalization with the one from \cite{GarVuo}.
For more calculations regarding $\la{sl}_3$ coloured invariants of $T(2,n)$, see \cite{GarMorVuo}.
\begin{thm}
\label{thm:p=2}
For any odd number $p'\geq 3$ we have:
\begin{align}
J_{T(2,p')}(L_3(n\Lambda_1))=
\dfrac{(-1)^{n}q^{\frac{p'}{2}(\frac{n^2}{3}+n)-n}}{(1-q)^2(1-q^2)}
\sum_{i=0}^{n} 
(-1)^{i}
q^{\frac{p'}{2}(i^2+i)-i}
(1-q^{n-i+1})(1-q^{2i+1})(1-q^{n+i+2})
\label{eqn:p=2}
\end{align}
and so,
\begin{align}
\lim_{n\rightarrow\infty}\widehat{J}_{T(2,p')}(L_3(n\Lambda_1))=
\dfrac{1}{(1-q)^2(1-q^2)}
\sum_{i=0}^{\infty} 
(-1)^{i}
q^{\frac{p'}{2}(i^2+i)-i}
(1-q^{2i+1})
=\dfrac{(q;q)_\infty\cdot \overline{\chi}^{2,2,p'}_{0,0}}{(1-q)^2(1-q^2)}
\label{eqn:p=2lim}
\end{align}
\end{thm}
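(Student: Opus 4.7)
The plan is to apply the master formula \eqref{eqn:mainjones} with $r=3$ and $p=2$, evaluate the inner sum over $\fS_3$ via the Weyl denominator identity, and reduce the resulting double sum over $\Pi_{3,n}$ to the single sum in \eqref{eqn:p=2}. First I would parametrise $\lambda\in \Pi_{3,n}$ uniquely as $\lambda=n\Lambda_1-i\alpha_1-j\alpha_2$ with $n\geq i\geq j\geq 0$, and compute
\begin{align*}
\|\lambda\|^2&=\tfrac{2n^2}{3}-2ni+2i^2-2ij+2j^2,\qquad (\lambda,\weylvec)=n-i-j,\\
(\lambda,\alpha_1)&=n-2i+j,\qquad (\lambda,\alpha_2)=i-2j.
\end{align*}

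Next, setting $\mu=p'\lambda+\weylvec$, the $w$-dependent part of the exponent in \eqref{eqn:mainjones} is $(\mu,w\weylvec)$, and the $\la{sl}_3$ Weyl denominator identity (equivalently the Vandermonde determinant, specialised at $x_i\mapsto q^{\mu_i}$ and using $\mu_1+\mu_2+\mu_3=0$) yields
$$\sum_{w\in\fS_3}(-1)^{\ell(w)}q^{(\mu,w\weylvec)}=(q^{\mu_1}-q^{\mu_2})(q^{\mu_2}-q^{\mu_3})(q^{\mu_1}-q^{\mu_3})=q^{-C}(q^A-1)(q^B-1)(q^C-1),$$
with $A=p'(n-2i+j)+1$, $B=p'(i-2j)+1$, and $C=A+B=p'(n-i-j)+2$. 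Combined with $\qdim(\Delta_3)=q^{-2}(q-1)^2(q^2-1)$, equation \eqref{eqn:mainjones} becomes a double sum over $(i,j)$ in the triangle $n\geq i\geq j\geq 0$ with a factorised summand.

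The technical heart of the proof is to collapse this double sum to a single sum in $i$. I would expand $(q^A-1)(q^B-1)(q^C-1)=q^{2C}-q^{A+C}-q^{B+C}+q^A+q^B-1$ using $C=A+B$, split into six sums, and perform each inner $j$-sum at fixed $i$. Each of the six $j$-sums is a finite sum of $q$-exponentials quadratic in $j$; careful reindexing produces cancellations between neighbouring $(i,j)$ contributions and across the six subsums, and the surviving boundary contributions recombine into the product $(1-q^{n-i+1})(1-q^{2i+1})(1-q^{n+i+2})$ appearing in \eqref{eqn:p=2}. Matching the remaining $q$-power prefactor and overall sign against $1/\qdim(\Delta_3)$ (using \eqref{eqn:prefactor}) completes the derivation. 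Alternatively, one may first derive the plethysm
$$\psi_2(L_3(n\Lambda_1))=\sum_{i=0}^n(-1)^{n+i}L_3(2i\Lambda_1+(n-i)\Lambda_2)$$
(equivalent to the symmetric-function identity $h_n(x_1^2,x_2^2,x_3^2)=\sum_i(-1)^{n+i}s_{(n+i,n-i,0)}$) and then apply \eqref{eqn:torus_fd_unnorm} directly, evaluating $\qdim$ and $\theta^{p'/2}$ on the weights $2i\Lambda_1+(n-i)\Lambda_2$ using $\|\,\cdot\,\|^2=\tfrac{2n^2}{3}+2i^2$ and $(\,\cdot\,,\weylvec)=n+i$; this packages the same content via a classical plethysm identity.

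For the limit \eqref{eqn:p=2lim}, I would divide \eqref{eqn:p=2} by its trailing monomial to form $\widehat{J}_{T(2,p')}(L_3(n\Lambda_1))$ and pass termwise to $n\to\infty$ in $\ZZ[[q]]$: for each fixed $i$ the factors $(1-q^{n-i+1})$ and $(1-q^{n+i+2})$ converge to $1$, leaving only $(1-q^{2i+1})$ in each summand. Comparing the resulting infinite sum with the $r=p=2$ specialisation of \eqref{eqn:Wnormchar} (where $\weylvec=\Lambda_1$, $\|\weylvec\|^2=1/2$, and $\fS_2=\{e,s\}$) identifies it with $(q;q)_\infty\overline{\chi}^{2,2,p'}_{0,0}$. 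The principal obstacle is the bookkeeping in the collapse of the six subsums; the plethysm approach hides this obstacle inside a classical symmetric-function identity.
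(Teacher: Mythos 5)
Your proposal is correct, and it really contains two routes. The first one --- evaluating the inner $\fS_3$-sum in \eqref{eqn:mainjones} by the Vandermonde/Weyl-denominator specialization, expanding $q^{-C}(q^A-1)(q^B-1)(q^C-1)$ into six monomials, and collapsing the double $(i,j)$-sum --- is in essence the paper's own proof run in the opposite order: the cancellations you invoke between neighbouring $(i,j)$ contributions are exactly the pairing $(\lambda,w)\leftrightarrow(\lambda+w\theta,\,w(1,3))$ supplied by Lemma \ref{lem:cancellations}(2), the survivors are the boundary weights of $\Pi_{3,n}$, and recombining them into triple products requires regrouping them into Weyl orbits of dominant representatives and applying the same specialization identity again --- precisely the paper's six sets $S_1,\dots,S_6$ and Figure \ref{fig:p=2}. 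As written you assert this collapse rather than execute it, so the technical heart is left open there, but the mechanism you name is the right one and it does work. Your alternative route is genuinely different from the paper and arguably cleaner: the classical expansion $h_n[p_2]=\sum_{i=0}^n(-1)^{n-i}s_{(n+i,n-i)}$ (valid verbatim in three variables, since all partitions involved have at most two rows) gives $\psi_2(L_3(n\Lambda_1))=\sum_{i=0}^n(-1)^{n+i}L_3(2i\Lambda_1+(n-i)\Lambda_2)$, and then \eqref{eqn:torus_fd_unnorm} yields \eqref{eqn:p=2} with no cancellation analysis at all: for $\mu=2i\Lambda_1+(n-i)\Lambda_2$ one has $(\mu+\weylvec,\alpha_1)=2i+1$, $(\mu+\weylvec,\alpha_2)=n-i+1$, $(\mu+\weylvec,\theta)=n+i+2$, so $\qdim(L_3(\mu))$ contributes the three factors, $\theta_{L_3(\mu)}^{p'/2}=q^{\frac{p'}{2}(\frac{n^2}{3}+i^2+n+i)}$ contributes the quadratic $q$-powers, and dividing by $\qdim(\Delta_3)=-q^{-2}(1-q)^2(1-q^2)$ matches the sign $(-1)^n$ and the prefactor $q^{-n-i}$ exactly; I checked that this reproduces \eqref{eqn:p=2} on the nose. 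What the paper's route buys is uniformity: its cancellation lemma holds for all $p<r$ and feeds directly into Conjecture \ref{conj:p<r}, whereas your plethysm route would require the signed Schur expansion of $h_n[p_p]$ (ribbon/$p$-quotient combinatorics) which gets genuinely harder for larger $p$. The limit argument --- termwise passage in $\ZZ[[q]]$ after dividing by the trailing monomial, then matching \eqref{eqn:Wnormchar} at $r=p=2$ via an even/odd split of the index $i$ --- is the same in both treatments and agrees with the paper.
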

\begin{proof}
Various features and steps of this proof are depicted in Figure \ref{fig:p=2} for the case $n=4$.

In the case of $\la{sl}_3$ we have 
$\weylvec = \epsilon_1-\epsilon_3 = \theta$ where $\theta\in\Phi_3$ is the highest root and moreover, in this case
$\fS_3\weylvec=\Phi_3$.
From Lemma \ref{lem:cancellations}, we have $w\weylvec-\weylvec\in 2Q_3$ with $1\neq w\in\fS_3$ iff $w=(1,3)$ and in this case,
$(1,3)\weylvec-\weylvec=-2\theta$.

Now let $n\geq 1$.
Equation \eqref{eqn:mainjones} gives:
\begin{align}
J_{T(2,p')}(L_3(n\Lambda_1))
=
\dfrac{q^{-\frac{p'}{4}\|\weylvec\|^2}}{\qdim(\Delta_3)}\sum_{(\lambda,w)\in \Pi_{3,n}\times \fS_3}
(-1)^{\ell(w)}
q^{\frac{p'}{4}\| 2\lambda+w\weylvec\|^2+(2\lambda+w\weylvec,\weylvec)}.
\label{eqn:sl3_2}
\end{align}
This sum immediately involves cancellations which we now explain.
Given $\lambda\in \Pi_{3,n}$, suppose there exist $\widetilde{\lambda}\in \Pi_{3,n}$ and $\alpha\in\Phi_3$
such that $\widetilde{\lambda}=\lambda+\alpha$. 
There is now a unique $w\in\fS_3$ such that
$\alpha=w\theta$ (among $\la{sl}_r$, such a uniqueness only holds for $\la{sl}_2$ and $\la{sl}_3$.)
Let $\widetilde{w}=w(1,3)$.
We now have
$2\lambda+w\weylvec=2\widetilde{\lambda}+\widetilde{w}\weylvec$ and $(-1)^{\ell(w)}\neq (-1)^{\ell(\widetilde{w})}$.
This crucially implies that the contributions corresponding to $(\lambda,w)$ and $(\widetilde{\lambda},\widetilde{w})$ cancel
in \eqref{eqn:sl3_2}. Consequently, we may omit these terms from the summation.
The only terms $(\lambda,w)$ that now contribute to the summation are such that $\lambda+w\theta\not\in \Pi_{3,n}$.

It is now easy to see that $\lambda$ must belong to the boundary of $\Pi_{3,n}$. 
There are three sides to this boundary:
\begin{align*}
\lambda\in 
\{n\Lambda_1 - i\alpha_1\,|\, 0\leq i\leq n\} \cup
\{n\Lambda_1 - n\alpha_1-i\alpha_2\,|\, 0\leq i\leq n\} \cup
\{n\Lambda_1 - i\alpha_1-i\alpha_2\,|\, 0\leq i\leq n\}.
\end{align*}
In the first case, we have $w\theta= (1,2)\theta=\alpha_2 $ or $w\theta=\theta=\alpha_1+\alpha_2$;
in the second, we have $w\theta= (3,1,2)\theta=-\alpha_1 $ or $w\theta=(1,3)\theta=-\alpha_1-\alpha_2$;
in the third, we have $w\theta= (2,3)\theta=\alpha_1 $ or $w\theta=(1,3,2)\theta=-\alpha_2$.

Writing $2\lambda+w\weylvec = \mu$, we now see that each term in \eqref{eqn:sl3_2}
is of the form 
$$(-1)^{\ell(w)}  q^{(\mu, \frac{p'}{4}\mu+\weylvec)},$$
with $\mu$ belonging to one of the following six sets, along with the appropriate sign $(-1)^{\ell(w)}$:
\begin{align*}
S_1 = \{2n\Lambda_1 - 2i\alpha_1 + \alpha_1+\alpha_2\,|\, 0\leq i\leq n\}, &\quad (-1)^{\ell(w)}=1,\\
S_2 = \{2n\Lambda_1 - 2i\alpha_1 + \alpha_2\,|\, 0\leq i\leq n\}, &\quad (-1)^{\ell(w)}=-1,\\
S_3 = \{2n\Lambda_1 - 2n\alpha_1-2i\alpha_2-\alpha_1\,|\, 0\leq i\leq n\},  &\quad (-1)^{\ell(w)}=1,\\
S_4 = \{2n\Lambda_1 - 2n\alpha_1-2i\alpha_2-\alpha_1-\alpha_2\,|\, 0\leq i\leq n\},  &\quad (-1)^{\ell(w)}=-1,\\
S_5 = \{2n\Lambda_1 - 2i\alpha_1-2i\alpha_2-\alpha_2\,|\, 0\leq i\leq n\}, &\quad (-1)^{\ell(w)}=1,\\
S_6 = \{2n\Lambda_1 - 2i\alpha_1-2i\alpha_2+\alpha_1\,|\, 0\leq i\leq n\}, &\quad (-1)^{\ell(w)}=-1.
\end{align*}
That is, we have:
\begin{align*}
J_{T(2,p')}(L_r(n\Lambda_1))
=
\dfrac{q^{-\frac{p'}{4}\|\weylvec\|^2}}{\qdim(\Delta_3)}
\left(\sum_{\mu\in S_1} - \sum_{\mu\in S_2} +\sum_{\mu\in S_3}-\sum_{\mu\in S_4}+\sum_{\mu\in S_5}-\sum_{\mu\in S_6}\right)
q^{(\mu, \frac{p'}{4}\mu+\weylvec)}
\end{align*}
It is now not too hard to see that this sum can be rearranged by grouping together the Weyl translates of those 
$\mu\in S_1\cup\cdots \cup S_6$ that belong to the fundamental Weyl chamber.  
These are precisely the $\mu$ in $S_1$ with $0\leq i\leq \lfloor \frac{n}{2}\rfloor$
or $\mu$ in $S_2$ with $0\leq i\leq \lfloor \frac{n-1}{2}\rfloor$.
In other words, we have:
\begin{align*}
J_{T(2,p')}&(L_r(n\Lambda_1))
=
\dfrac{q^{-\frac{p'}{4}\|\weylvec\|^2}}{\qdim(\Delta_3)}
\left( \sum_{i=0}^{\lfloor\frac{n}{2}\rfloor} 
\sum_{w\in\fS_3}
(-1)^{\ell(w)}
q^{(w( 2n\Lambda_1-2i\alpha_1+\alpha_1+\alpha_2), \frac{p'}{4}w(2n\Lambda_1-2i\alpha_1+\alpha_1+\alpha_2) +\weylvec)}
\right.\nonumber\\
-&\left.
\sum_{i=0}^{\lfloor\frac{n-1}{2}\rfloor} 
\sum_{w\in\fS_3}
(-1)^{\ell(w)}
q^{(w( 2n\Lambda_1-2i\alpha_1+\alpha_2), \frac{p'}{4}w(2n\Lambda_1-2i\alpha_1+\alpha_2) +\weylvec)}
\right)\\
&=
\dfrac{q^{-\frac{p'}{4}\|\weylvec\|^2}}{\qdim(\Delta_3)}
\sum_{i=0}^{n} 
(-1)^{i}
\sum_{w\in\fS_3}
(-1)^{\ell(w)}
q^{(w( 2n\Lambda_1-i\alpha_1+\alpha_1+\alpha_2), \frac{p'}{4}w(2n\Lambda_1-i\alpha_1+\alpha_1+\alpha_2) + \weylvec)}
	\nonumber\\
&=
\dfrac{q^{-\frac{p'}{4}\|\weylvec\|^2}}{\qdim(\Delta_3)}
\sum_{i=0}^{n} 
(-1)^{i}
q^{\frac{p'}{4}(\frac{8n^2}{3}+2i^2-2i+2-4ni+4n)}
\sum_{w\in\fS_3}
(-1)^{\ell(w)}
q^{(w( 2n\Lambda_1-i\alpha_1+\alpha_1+\alpha_2),  \weylvec)}
	\nonumber\\
&=\dfrac{q^{-\frac{p'}{4}\|\weylvec\|^2}}{\qdim(\Delta_3)}
\sum_{i=0}^{n} 
(-1)^{i+1}
q^{\frac{p'}{4}(\frac{8n^2}{3}+2i^2-2i+2-4ni+4n)}
q^{-2-2n+i}
(1-q^{i+1})(1-q^{2n-2i+1})(1-q^{2n-i+2})
	\nonumber\\
&=\dfrac{(-1)^{n+1}q^{\frac{p'}{2}(\frac{n^2}{3}+n)-2-n}}{\qdim(\Delta_3)}
\sum_{i=0}^{n} 
(-1)^{i}
q^{\frac{p'}{2}(i^2+i)-i}
(1-q^{n-i+1})(1-q^{2i+1})(1-q^{n+i+2}),
\end{align*}
as required. Now the limit of shifted Jones polynomials follows easily.
\end{proof}

Just like the $r=p$ case above, we see that the limit in this
example is also independent of the congruence class of $n$ modulo $r$.
Computer experiments suggest that this continues to happen for higher values
of $r$ and with $p<r$. In fact, experiments with $r=3,4,5$ suggest the following precise conjecture.
Note that the case $p=r$ proved above also fits the pattern of this conjecture.
\begin{conj} \label{conj:p<r}
Let $2\leq p<p'$ be a pair of coprime positive integers and let $2\leq r$ be such that $p<r$.
Letting $\delta_r$ denote the Weyl vector of $\la{sl}_r$ and $\Phi^+_0=\Phi^+_1=\emptyset$ we have:
\begin{align}
\lim_{n\rightarrow\infty}\widehat{J}_{T(p,p')}(L_r(n\Lambda_1))
=\dfrac{\prod_{\alpha\in \Phi^{+}_{r-p}} (1-q^{(\alpha,\weylvec_{r-p})})}{\prod_{\alpha\in \Phi^{+}_{r}} (1-q^{(\alpha,\weylvec_{r})})}
(q;q)^{p-1}_\infty \cdot\overline{\chi}^{p,p,p'}_{0,0}.
\end{align}
\end{conj}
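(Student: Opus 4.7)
\emph{Proof plan.} The plan is to generalize the pairing-and-cancellation strategy driving the $r=3$, $p=2$ case (Theorem \ref{thm:p=2}) to arbitrary $p<r$. Starting from \eqref{eqn:mainjones}, I would first observe that Lemma \ref{lem:cancellations}(1) extends verbatim to every transposition $\tau_i=(i,i+p)$ with $1\le i\le r-p$: a direct computation in the $\epsilon$-basis gives $\tau_i\weylvec-\weylvec=-p(\alpha_i+\cdots+\alpha_{i+p-1})\in -pQ_r$. Setting $\gamma_i=\alpha_i+\cdots+\alpha_{i+p-1}$, the assignment
\[
(\lambda,w)\;\longmapsto\;(\lambda+w\gamma_i,\,w\tau_i)
\]
preserves the monomial $q^{\frac{p'}{2p}\|p\lambda+w\weylvec\|^2+(p\lambda+w\weylvec,\weylvec)}$ while flipping $(-1)^{\ell(w)}$; so whenever both $\lambda$ and $\lambda+w\gamma_i$ lie in $\Pi_{r,n}$, the corresponding contributions to \eqref{eqn:mainjones} cancel.

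The next step is to promote these one-step cancellations to a systematic pairing using the full subgroup $H=\langle\tau_i:1\le i\le r-p\rangle\le\fS_r$, which structurally decomposes as a direct product of symmetric groups indexed by the residue classes of $\{1,\dots,r\}$ modulo $p$. The goal is to sum over orbits (or right cosets) of $H$ so that only a controlled boundary contribution survives, with $\lambda$ pinned to a distinguished face of the weight polytope $\Pi_{r,n}$ parallel to the $H$-action. In the $r=3$, $p=2$ case $H=\{1,(1,3)\}$ and the survivors are forced onto the one-dimensional boundary of the triangle $\Pi_{3,n}$, giving the six sets $S_1,\dots,S_6$ seen in the proof of Theorem \ref{thm:p=2}; a natural guess for arbitrary $p<r$ is that the survivors live on an $(r-p)$-dimensional ``top'' face of $\Pi_{r,n}$ on which the $\la{sl}_p$-structure effectively remains.

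With the survivors in hand, I would re-index the remaining sum as one over $Q_p\times\fS_p$, with $\fS_p$ embedded as the permutations of $\epsilon_1,\dots,\epsilon_p$ (the ``orthogonal'' complement to $H$), and match it with the $\sW_p(p,p')$ character formula \eqref{eqn:Wnormchar}. The positive roots of $\la{sl}_r$ lying neither in $\Phi^+_p$ nor in $\Phi^+_{r-p}$ should combine with the prefactor \eqref{eqn:prefactor} and with a factor $\prod_{\alpha\in\Phi^+_{r-p}}(1-q^{(\alpha,\weylvec_{r-p})})$ arising from the $(r-p)$-dimensional face to give exactly the ratio in the conjecture; after this, dividing by the trailing monomial of $J_{T(p,p')}(L_r(n\Lambda_1))$ and sending $n\to\infty$ yields the stated identity. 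The fact that the limit must be independent of $n\bmod r$, already visible in the $r=3$, $p=2$ proof and in the $r=p$ case via \eqref{eqn:r=p}, would supply a built-in consistency check.

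The main obstacle is the boundary analysis in the second step. For $r=3$, $p=2$ the boundary of $\Pi_{3,n}$ is one-dimensional and consists of three easily-described edges, but for general $(p,r)$ the faces surviving after $H$-pairing have a considerably more intricate combinatorial structure; showing that an $(r-p)$-dimensional face of $\Pi_{r,n}$ collapses to produce precisely the Weyl-denominator factor of $\la{sl}_{r-p}$, as opposed to some unrelated $q$-product, is where the genuinely non-routine content of the conjecture lies.
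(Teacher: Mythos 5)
First, a point of order: the statement you are addressing is Conjecture \ref{conj:p<r}, which the paper does \emph{not} prove. The paper's only supporting evidence is the special case $r=3$, $p=2$ (Theorem \ref{thm:p=2}), the consistency with the $r=p$ case via \eqref{eqn:r=p}, and computer experiments for $r=3,4,5$. So there is no proof in the paper to compare against, and your proposal must be judged on its own as an attempted proof. As such, it has genuine gaps — which, to your credit, you partially acknowledge. Your first step is correct and checks out: for $\tau_i=(i,i+p)$ one indeed has $\tau_i\weylvec-\weylvec=-p(\alpha_i+\cdots+\alpha_{i+p-1})$, so the map $(\lambda,w)\mapsto(\lambda+w\gamma_i,\,w\tau_i)$ preserves $p\lambda+w\weylvec$ and flips the sign, exactly generalizing the cancellation mechanism in the proof of Theorem \ref{thm:p=2}. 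But this is the easy part, and it is essentially Lemma \ref{lem:cancellations}(1) restated.

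The gaps are the following. First, when $r-p>1$ there are several cancellation moves available from a single pair $(\lambda,w)$, and your pairing is not a well-defined involution: a term can be matched with different partners depending on which $\tau_i$ (and which boundary direction) one uses, and terms near corners of $\Pi_{r,n}$ may admit some moves but not others. The paper itself flags the underlying issue in the proof of Theorem \ref{thm:p=2}: the uniqueness of $w$ with $\alpha=w\theta$, which makes the pairing canonical there, ``only holds for $\la{sl}_2$ and $\la{sl}_3$.'' Your proposal to ``sum over orbits or cosets of $H$'' does not resolve this, because the relevant cancellation condition (both $\lambda$ and $\lambda+w\gamma_i$ in $\Pi_{r,n}$) is not $H$-equivariant — it depends on the position of $\lambda$ relative to the boundary of the polytope. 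Second, even granting a clean description of the surviving terms, the passage from the survivors to the claimed right-hand side — in particular, producing the factor $\prod_{\alpha\in\Phi^+_{r-p}}(1-q^{(\alpha,\weylvec_{r-p})})$ and the character $\overline{\chi}^{p,p,p'}_{0,0}$ of a \emph{smaller} algebra $\sW_p(p,p')$ — is asserted, not derived; you correctly identify this as the non-routine content, but that means the proposal stops exactly where the conjecture begins. Third, there is an analytic gap you do not address: by Proposition \ref{prop:p<r} the unnormalized limit is $0$, so the conjecture concerns $\widehat{J}$, i.e., the series divided by its trailing monomial. One must therefore control the minimal $q$-degree of $J_{T(p,p')}(L_r(n\Lambda_1))$ uniformly in $n$ after all cancellations — precisely the point the paper calls ``quite tricky'' — before any termwise limit of the surviving sum can be converted into a statement about $\lim_n\widehat{J}$. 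In short: a reasonable research program, consistent with the evidence in the paper, but not a proof.
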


\begin{figure}
\begin{tikzpicture}[x={(30:0.6cm)},y={(150:0.6cm)},scale=0.85]
\clip (-6cm,-6.2cm) rectangle (6cm,3.9cm);

	\draw[thin] (0,0) -- (1,-1);
	\draw[thin] (0,0) -- (1,2);

	\node (or) at (0,0) {};
	\node at (-0.3,-0.3) {\scriptsize{$0$}};

	\node (a1) at (1,-1) {};
	\node at (1.3,-1.1) {\scriptsize{$\alpha_1$}};
	\node (a2) at (1,2) {};
	\node at (1.3,1.9) {\scriptsize{$\alpha_2$}};
	\node (l1) at (1,0) {};
	\node (l2) at (1,1) {};
	\node at (4.8,-0.1) {\scriptsize{$4\Lambda_1$}};

	\draw[violet,dotted] (0,0) -- ($20*(l1)$);
	\draw[violet,dotted] (0,0) -- ($20*(l2)$);
  	
  	\draw[violet,dotted] (0,0) -- ($-20*(l1)$);
	\draw[violet,dotted] (0,0) -- ($-20*(l2)$);

	\draw[violet,dotted] (0,0) -- ($20*(l1)-20*(a1)$);
	\draw[violet,dotted] (0,0) -- ($-20*(l1)+20*(a1)$);
  	
	\fill [opacity=0.25,gray] (0,0) -- ($20*(l1)$) -- ($20*(l2)$) -- (0,0);

	\foreach \x in {-20,...,20}
    	\foreach \y in {-20,...,20}
    	{
    		\fill (\x,\y) circle (0.5pt);
    	}

	\newcommand{\moduleweights}[3]{
		\foreach \x in {0,...,#1}{
			\foreach \y in {0,...,\x}
			{
				\node at ($#1*(l1)-\x*(a1)-\y*(a2)$) {\textcolor{#2}{#3}};
			}
		}
		\draw[dashed,draw=#2,thin] ($ #1*(l1) $) -- ($#1*(l1)-#1*(a1)$)--($#1*(l1)-#1*(a1)-#1*(a2)$) -- ($ #1*(l1) $);
	}

	\moduleweights{4}{Aquamarine}{{$\bullet$}};

 	\foreach \x in {-10,...,10}
    	\foreach \y in {-10,...,10}
    	{
    		\fill (\x,\y) circle (0.5pt);
    	}

	\foreach \x in {0,...,4}{
		\draw[-latex,draw=purple,fill=purple] ($4*(l1)-\x*(a1)$) -- ($4*(l1)-\x*(a1)+(a1)+(a2)$);
		\draw[-latex,draw=purple,fill=purple] ($4*(l1)-\x*(a1)$) -- ($4*(l1)-\x*(a1)+(a2)$);

		\draw[-latex,draw=purple,fill=purple] ($4*(l1)-4*(a1)-\x*(a2)$) -- ($4*(l1)-4*(a1)-\x*(a2)-(a1)$);
		\draw[-latex,draw=purple,fill=purple] ($4*(l1)-4*(a1)-\x*(a2)$) -- ($4*(l1)-4*(a1)-\x*(a2)-(a1)-(a2)$);

		\draw[-latex,draw=purple,fill=purple] ($4*(l1)-\x*(a1)-\x*(a2)$) -- ($4*(l1)-\x*(a1)-\x*(a2)+(a1)$);
		\draw[-latex,draw=purple,fill=purple] ($4*(l1)-\x*(a1)-\x*(a2)$) -- ($4*(l1)-\x*(a1)-\x*(a2)-(a2)$);
	}

	\foreach \x in {0,...,4}{
		\node at ($8*(l1)-2*\x*(a1)+(a1)+(a2)$) {\textcolor{blue}{{$\boldsymbol{+}$}}};
		\node at ($8*(l1)-2*\x*(a1)+(a2)$) {\textcolor{red}{{$\boldsymbol{-}$}}};

		\node at ($8*(l1)-8*(a1)-2*\x*(a2)-(a1)$) {\textcolor{blue}{{$\boldsymbol{+}$}}};
		\node at ($8*(l1)-8*(a1)-2*\x*(a2)-(a1)-(a2)$) {\textcolor{red}{{$\boldsymbol{-}$}}};

		\node at ($8*(l1)-2*\x*(a1)-2*\x*(a2)+(a1)$) {\textcolor{red}{{$\boldsymbol{-}$}}};
		\node at ($8*(l1)-2*\x*(a1)-2*\x*(a2)-(a2)$) {\textcolor{blue}{{$\boldsymbol{+}$}}};
	}

	\node at ($8*(l1)-0*(a1)+(a1)+(a2)$) {\textcolor{blue}{{$\boldsymbol{+}$}}};
	\node at ($8*(l1)-2*(a1)+(a1)+(a2)$) {\textcolor{blue}{{$\boldsymbol{+}$}}};
	\node at ($8*(l1)-4*(a1)+(a1)+(a2)$) {\textcolor{blue}{{$\boldsymbol{+}$}}};
	\node at ($8*(l1)-0*(a1)+(a2)$) {\textcolor{red}{{$\boldsymbol{-}$}}};
	\node at ($8*(l1)-2*(a1)+(a2)$) {\textcolor{red}{{$\boldsymbol{-}$}}};

\end{tikzpicture}
\caption{Proof of theorem \ref{thm:p=2} with $n=4$.
The points \textcolor{Aquamarine}{$\bullet$} denote weights of $L_3(4\Lambda_1)$. 
The arrows \textcolor{purple}{$\rightarrow$}
correspond to pairs $(\lambda,w)$ such that $\lambda\in \Pi_{3,4}$ but $\lambda+w\weylvec\not\in\Pi_{3,4}$.
These arrows originate at $\lambda$ and point in the direction of $w\weylvec$. 
Points \textcolor{red}{$\boldsymbol{-}$} and \textcolor{blue}{$\boldsymbol{+}$} denote the actual contributions to the 
sum, with blue points giving a positive contribution and red giving negative. 
The gray region denotes fundamental Weyl chamber. Points \textcolor{red}{$\boldsymbol{-}$} and \textcolor{blue}{$\boldsymbol{+}$} 
belonging to this region appear in the final summation \eqref{eqn:p=2}.
}
\label{fig:p=2}
\end{figure}


\begin{thebibliography}{10}

\bibitem{And-knots}
G.~E. Andrews.
\newblock Knots and {$q$}-series.
\newblock In {\em Ramanujan 125}, volume 627 of {\em Contemp. Math.}, pages
  17--24. Amer. Math. Soc., Providence, RI, 2014.

\bibitem{AndSchWar}
G.~E. Andrews, A.~Schilling, and S.~O. Warnaar.
\newblock An {$A_2$} {B}ailey lemma and {R}ogers--{R}amanujan-type identities.
\newblock {\em J. Amer. Math. Soc.}, 12(3):677--702, 1999.

\bibitem{Ara-c2}
T.~Arakawa.
\newblock Associated varieties of modules over {K}ac-{M}oody algebras and
  {$C_2$}-cofiniteness of {$W$}-algebras.
\newblock {\em Int. Math. Res. Not. IMRN}, (22):11605--11666, 2015.

\bibitem{Ara-princrat}
T.~Arakawa.
\newblock Rationality of {$W$}-algebras: principal nilpotent cases.
\newblock {\em Ann. of Math. (2)}, 182(2):565--604, 2015.

\bibitem{Arm-headtail}
C.~Armond.
\newblock The head and tail conjecture for alternating knots.
\newblock {\em Algebr. Geom. Topol.}, 13(5):2809--2826, 2013.

\bibitem{ArmDas-RR}
C.~Armond and O.~T. Dasbach.
\newblock {R}ogers-{R}amanujan type identities and the head and tail of the
  colored {J}ones polynomial.
\newblock \preprint{1106.3948}{math.GT}.

\bibitem{Bei-pretzel}
P.~Beirne.
\newblock On the 2-head of the colored {J}ones polynomial for pretzel knots.
\newblock {\em Q. J. Math.}, 70(4):1353--1370, 2019.

\bibitem{BeiOsb}
P.~Beirne and R.~Osburn.
\newblock {$q$}-series and tails of colored {J}ones polynomials.
\newblock {\em Indag. Math. (N.S.)}, 28(1):247--260, 2017.

\bibitem{BriMil-singlet}
K.~Bringmann and A.~Milas.
\newblock {$\mathcal W$}-algebras, false theta functions and quantum modular
  forms, {I}.
\newblock {\em Int. Math. Res. Not. IMRN}, (21):11351--11387, 2015.

\bibitem{CheEtAl}
M.~C.~N. Cheng, S.~Chun, B.~Feigin, F.~Ferrari, S.~Gukov, S.~M. Harrison, and
  D.~Passaro.
\newblock 3-manifolds and {V}{O}{A} characters.
\newblock \preprint{2201.04640}{hep-th}.

\bibitem{CorDouUnc}
S.~Corteel, J.~Dousse, and A.~Uncu.
\newblock Cylindric partitions and some new {$A_2$} {R}ogers--{R}amanujan
  identities.
\newblock {\em Proc. Amer. Math. Soc.}, 150(2):481--497, 2021.

\bibitem{DasLin}
O.~T. Dasbach and X.-S. Lin.
\newblock On the head and the tail of the colored {J}ones polynomial.
\newblock {\em Compos. Math.}, 142(5):1332--1342, 2006.

\bibitem{DonLinNg}
C.~Dong, X.~Lin, and S.-H. Ng.
\newblock Congruence property in conformal field theory.
\newblock {\em Algebra Number Theory}, 9(9):2121--2166, 2015.

\bibitem{EtiGorLos}
P.~Etingof, E.~Gorsky, and I.~Losev.
\newblock Representations of rational {C}herednik algebras with minimal support
  and torus knots.
\newblock {\em Adv. Math.}, 277:124--180, 2015.

\bibitem{FodWel}
O.~Foda and T.~A. Welsh.
\newblock Cylindric partitions, {$\mathcal{W}_r$} characters and the
  {A}ndrews-{G}ordon-{B}ressoud identities.
\newblock {\em J. Phys. A}, 49(16):164004, 37, 2016.

\bibitem{FulHar}
W.~Fulton and J.~Harris.
\newblock {\em Representation theory}, volume 129 of {\em Graduate Texts in
  Mathematics}.
\newblock Springer-Verlag, New York, 1991.
\newblock A first course, Readings in Mathematics.

\bibitem{GarLe-Nahm}
S.~Garoufalidis and T.~T.~Q. L\^{e}.
\newblock Nahm sums, stability and the colored {J}ones polynomial.
\newblock {\em Res. Math. Sci.}, 2:Art. 1, 55, 2015.

\bibitem{GarMorVuo}
S.~Garoufalidis, H.~Morton, and T.~Vuong.
\newblock The {$\mathrm{SL}_3$} colored {J}ones polynomial of the trefoil.
\newblock {\em Proc. Amer. Math. Soc.}, 141(6):2209--2220, 2013.

\bibitem{GarVuo}
S.~Garoufalidis and T.~Vuong.
\newblock A stability conjecture for the colored {J}ones polynomial.
\newblock {\em Topology Proc.}, 49:211--249, 2017.

\bibitem{GesKra}
I.~M. Gessel and C.~Krattenthaler.
\newblock Cylindric partitions.
\newblock {\em Trans. Amer. Math. Soc.}, 349(2):429--479, 1997.

\bibitem{Hal-stability}
K.~W. Hall.
\newblock Higher order stability in the coefficients of the colored {J}ones
  polynomial.
\newblock {\em J. Knot Theory Ramifications}, 27(3):1840010, 26, 2018.

\bibitem{Hik-diff}
K.~Hikami.
\newblock Difference equation of the colored {J}ones polynomial for torus knot.
\newblock {\em Internat. J. Math.}, 15(9):959--965, 2004.

\bibitem{HikKir}
K.~Hikami and A.~N. Kirillov.
\newblock Torus knot and minimal model.
\newblock {\em Phys. Lett. B}, 575(3-4):343--348, 2003.

\bibitem{KanRus-cylindric}
S.~Kanade and M.~C. Russell.
\newblock Completing the $\mathrm{A}_2$ {A}ndrews--{S}chilling--{W}arnaar
  identities.
\newblock \preprint{2203.05690}{math.CO}.

\bibitem{KeiOsb}
A.~Keilthy and R.~Osburn.
\newblock Rogers-{R}amanujan type identities for alternating knots.
\newblock {\em J. Number Theory}, 161:255--280, 2016.

\bibitem{Mor-coloured}
H.~R. Morton.
\newblock The coloured {J}ones function and {A}lexander polynomial for torus
  knots.
\newblock {\em Math. Proc. Cambridge Philos. Soc.}, 117(1):129--135, 1995.

\bibitem{RosJon-torus}
M.~Rosso and V.~Jones.
\newblock On the invariants of torus knots derived from quantum groups.
\newblock {\em J. Knot Theory Ramifications}, 2(1):97--112, 1993.

\bibitem{Tsu}
S.~Tsuchioka.
\newblock An example of {$A_2$} {R}ogers-{R}amanujan bipartition identities of
  level 3.
\newblock \preprint{2205.04811}{math.RT}.

\bibitem{War}
S.~O. Warnaar.
\newblock The {$A_2$} {A}ndrews-{G}ordon identities and cylindric partitions.
\newblock \preprint{2111.07550}{math.CO}.

\bibitem{Yua-stability}
W.~Yuasa.
\newblock The zero stability for the one-row colored {$\mathfrak{sl}_3$}
  {J}ones polynomial.
\newblock \preprint{2007.15621}{math.GT}.

\bibitem{Yua-q}
W.~Yuasa.
\newblock A {$q$}-series identity via the {$\mathfrak{sl}_3$} colored {J}ones
  polynomials for the {$(2,2m)$}-torus link.
\newblock {\em Proc. Amer. Math. Soc.}, 146(7):3153--3166, 2018.

\bibitem{Yua-torus}
W.~Yuasa.
\newblock Twist formulas for one-row colored {$A_2$} webs and
  {$\mathfrak{sl}_3$} tails of {$(2,2m)$}-torus links.
\newblock {\em Acta Math. Vietnam.}, 46(2):369--387, 2021.

\bibitem{Zhu}
Y.~Zhu.
\newblock Modular invariance of characters of vertex operator algebras.
\newblock {\em J. Amer. Math. Soc.}, 9(1):237--302, 1996.

\end{thebibliography}

\providecommand{\oldpreprint}[2]{\textsf{arXiv:\mbox{#2}/#1}}\providecommand{\preprint}[2]{\textsf{arXiv:#1
  [\mbox{#2}]}}

\end{document}